\newtheorem{thm}{Theorem}[section]
\newtheorem{la}[thm]{Lemma}
\newtheorem{Defn}[thm]{Definition}
\newtheorem{Remark}[thm]{Remark}
\newtheorem{prop}[thm]{Proposition}
\newtheorem{cor}[thm]{Corollary}
\newtheorem{Example}[thm]{Example}
\newtheorem{Number}[thm]{\!\!}
\newenvironment{defn}{\begin{Defn}\rm}{\end{Defn}}
\newenvironment{example}{\begin{Example}\rm}{\end{Example}}
\newenvironment{rem}{\begin{Remark}\rm}{\end{Remark}}
\newenvironment{numba}{\begin{Number}\rm}{\end{Number}}
\newenvironment{proof}{{\noindent\bf Proof.}}%
                  {\nopagebreak\hspace*{\fill}$\Box$\medskip\medskip\par}   
\newcommand{\Punkt}{\nopagebreak\hspace*{\fill}$\Box$}
\newcommand{\wb}{\overline}
\newcommand{\ve}{\varepsilon}
\newcommand{\at}{\symbol{'100}}
\newcommand{\tensor}{\otimes}
\newcommand{\mto}{\mapsto}
\newcommand{\isom}{\cong}
\newcommand{\N}{{\mathbb N}}
\newcommand{\R}{{\mathbb R}}
\newcommand{\cO}{{\cal O}}
\newcommand{\cV}{{\cal V}}
\newcommand{\dl}{{\displaystyle \lim_{\longrightarrow}}}
\newcommand{\sub}{\subseteq}
\newcommand{\cB}{{\cal B}}
\newcommand{\cF}{{\cal F}}
\newcommand{\cK}{{\cal K}}
\newcommand{\cR}{{\cal R}}
\newcommand{\cL}{{\cal L}}
\newcommand{\cT}{{\cal T}}
\DeclareMathOperator{\conv}{conv}
\DeclareMathOperator{\Supp}{supp}
\DeclareMathOperator{\lcx}{lcx}
\begin{document}
$\;$\\[-24mm]
\begin{center}
{\Large\bf Upper bounds for continuous seminorms and\\[2mm]
special properties of bilinear maps}\\[7mm]
{\bf Helge Gl\"{o}ckner}\vspace{4mm}
\end{center}
\begin{abstract}\noindent
If $E$ is a locally convex topological vector space,
let $(P(E),\preceq)$ be the pre-ordered set of all continuous seminorms
on~$E$.
We study, on the one hand,
for $\theta$ an infinite cardinal
those locally convex spaces~$E$
which have the $\theta$-neighbourhood property introduced by E. Jord\'{a},
meaning that all sets $M$ of continuous
seminorms of cardinality $|M|\leq \theta$
have an upper bound in $P(E)$.
On the other hand, we study
bilinear maps $\beta\colon E_1\times E_2\to F$
between locally
convex spaces which admit ``product estimates''
in the sense that for all $p_{i,j}\in P(F)$,
$i,j=1,2,\ldots$, there exist $p_i\in P(E_1)$
and $q_j\in P(E_2)$ such that $p_{i,j}(\beta(x,y))\leq p_i(x)q_j(y)$
for all $(x,y)\in E_1\times E_2$.
The relations\
between these concepts
are explored, and examples given.
The main applications concern spaces
$C^r_c(M,E)$ of vector-valued test functions on manifolds.\vspace{3mm}
\end{abstract}
{\footnotesize {\em Classification}:
46A03,
46F05
%
 (primary);
%
22D15,
%
22E30,
%
42A85,
%
44A35,
%
46A11,
%
46A13,
%
46A32,
%
46E25,
%
46H05,
%
46M05.\\[1mm]
%
%
{\em Key words}: Countable neighbourhood property, upper bound condition, pre-order, seminorm, bilinear map, convolution, test function, tensor algebra, product estimates, direct sum, Lie group, manifold, countable basis, second countability, paracompactness, compact covering number}
\section{Introduction}
Primarily, this article is devoted to a strengthened
continuity property for bilinear maps which arose recently
in the study of convolution of vector-valued test functions.
In addition, it describes relations between this notion and the countable neighbourhood property,
and discusses further applications of the latter (and the $\theta$-neighbourhood property).\\[3mm]
{\bf Neighbourhood properties.}
For~$E$ a locally convex space,
we obtain a pre-order $\preceq$ on the set $P(E)$ of all continuous seminorms on~$E$
by declaring $p\preceq q$ if $p\leq Cq$ pointwise for some $C>0$.
The space $E$ is said to have the \emph{countable neighbourhood property} (or \emph{cnp}, for short)
if each countable set of
continuous
seminorms has an upper bound in $(P(E),\preceq)$
(see \cite{Fl2} and the references therein).
Likewise,
given an infinite cardinal number~$\theta$,
the space $E$ is said to have the \emph{$\theta$-neighbourhood property}
(of \emph{$\theta$-np}, for short)
if for each set $M$ of continuous seminorms on~$E$ of cardinality
$|M|\leq\theta$, there exists a continuous seminorm~$q$ on~$E$
such that $p\preceq q$ for all $p\in M$ (see \cite[Definition~4.4]{Jor}).\\[2mm]
Besides classical studies (see \cite{Bon}, \cite{Fl2} and the references therein),
the countable neighbourhood property also occurred more recently in
the study of the tensor algebra $\cT(E)$  of a locally convex space~$E$.
Topologize $\cT(E):=\bigoplus_{n\in \N_0}\cT^n(E)$
as the locally convex direct
sum of the projective tensor powers $\cT^0(E):=\R$, $\cT^1(E):=E$,
$\cT^{n+1}(E):=\cT^n(E)\tensor_\pi E$ of $E$.
Answering a question by K.-H. Neeb \cite[Problem~VIII.5]{Nee},
it was shown
that $\cT(E)$ is a topological algebra
(i.e., the bilinear tensor multiplication $\cT(E)\times \cT(E)\to \cT(E)$
is jointly
continuous)
if and only if~$E$ has the cnp \cite[Theorem~B]{Glo}.\\[3mm]
{\bf Product estimates.}
Following \cite{BaG},
a bilinear map $\beta\colon E_1\times E_2\to F$
between locally convex spaces is said to \emph{admit product
estimates} if, for each double sequence $(p_{i,j})_{i,j\in \N}$
of continuous seminorns $p_{i,j}$ on~$F$,
there exists a sequence $(p_i)_{i\in \N}$ of continuous seminorms on~$E_1$
and a sequence $(q_j)_{j\in \N}$
of continuous seminorms on~$E_2$ such that
\[
(\forall i,j\in \N,\; x\in E_1, \; y\in E_2)\quad p_{i,j}(\beta(x,y))\leq p_i(x)q_j(y)\,.
\]
If $\beta$ admits product estimates, then $\beta$ is continuous.\footnote{If $p$ is a continuous seminorm on~$F$,
set $p_{i,j}:=p$ for all $i,j\in \N$, and find corresponding $p_i,q_j$.
Then $p(\beta(x,y))\leq p_1(x)q_1(y)$ for all $x\in E_1$, $y\in E_2$.}
However, a continuous bilinear map need not admit product estimates
(see Section~\ref{secno}).\footnote{If $\beta$ is continuous, given $p_{i,j}$ as before
we can still find continuous seminorms~$P_{i,j}$ on~$E_1$
and~$Q_{i,j}$ on~$E_2$ such that $p_{i,j}(\beta(x,y))\leq P_{i,j}(x)Q_{i,j}(y)$.
However, in general one cannot choose $P_{i,j}$ independently of~$j$,
nor $Q_{i,j}$ independently of $i$.}
Thus, the existence of product estimates
can be regarded as a \emph{strengthened continuity property} for
bilinear maps.\\[3mm]
The concept of product estimates first arose in the study of convolution of
vector-valued test functions. Consider the following setting
(to which we shall return later):
\begin{numba}\label{setting}
Let $b\colon E_1\times E_2\to F$ be a continuous bilinear map
between locally convex spaces such that $b\not=0$.
Let $r,s,t\in \N_0\cup\{\infty\}$
with $t\leq r+s$.
If $r=s=t=0$, let $G$ be a locally compact group;
otherwise, let $G$ be a Lie group.
Let $\lambda_G$ be a left Haar measure on $G$.
If $G$ is discrete, we need not impose any completeness assumptions on $F$.
If $G$ is metrizable and not discrete, we assume that $F$ is sequentially complete.
If $G$ is not metrizable (and hence not discrete either),
we assume that $F$ is complete.
These conditions
ensure the existence of the integrals needed to define the convolution
$\gamma*_b\eta\colon G\to F$
of $\gamma\in C^r_c(G,E_1)$ and $\eta\in C^s_c(G,E_2)$
via
\begin{equation}\label{defco}
(\gamma*_b\eta)(x):=\int_G b(\gamma(y), \eta(y^{-1}x))\, d\lambda_G(y)\qquad \mbox{for $\, x\in G$.}
\end{equation}
Then $\gamma*_b\eta\in C^{r+s}_c(G,F)$ (see \cite[Proposition~2.2]{BaG}),
whence
\begin{equation}\label{defbeta}
\beta_b\colon
C^r_c(G, E_1)\times C^s_c(G, E_2)\to C^t_c(G, F)\,,\quad
(\gamma,\eta)\mapsto \gamma*_b\eta
\end{equation}
makes sense.
\end{numba}
For $G$ compact, $\beta_b$ is always continuous \cite[Corollary~2.3]{BaG}.
If $G$ is an infinite discrete group, then $\beta_b$ is continuous
if and only if~$G$ is countable and~$b$ admits product estimates \cite[Proposition~6.1]{BaG}.
The main result of \cite{BaG} reads:\\[4mm]
{\bf Theorem A.} \emph{If $G$ is neither discrete nor compact,
then the convolution map~$\beta_b$ from} (\ref{defbeta}) \emph{is
continuous if and only if}
(a), (b) \emph{and} (c) \emph{are satisfied}:
\begin{itemize}
\item[(a)]
\emph{$G$ is $\sigma$-compact};
\item[(b)]
\emph{If $t=\infty$, then also $r=s=\infty$}; \emph{and}
\item[(c)]
\emph{$b$ admits product estimates.}\vspace{2mm}
\end{itemize}
{\bf Structure of the article and main results.}
After some preliminaries
(Section~\ref{secprel}),
we recall various examples of spaces with neighbourhood properties,
and some permanence properties of the class of spaces
possessing the $\theta$-np (Section~\ref{secinherit}).
In Section~\ref{pivot},
we prove two simple, but essential results,
which link the concepts discussed in this article:
If $E_1$, $E_2$ and $F$ are locally convex spaces
and $F$ or both of $E_1$ and $E_2$ have the cnp,
then every continuous bilinear map $E_1\times E_2\to F$
admits product estimates (see Propositions~\ref{cubcuseful}
and \ref{newprop}).
This immediately gives a large supply of mappings admitting product estimates.
In Section~\ref{secno}, we describe two simple concrete
examples of continuous bilinear maps
for which it can be shown by hand that they do not admit product estimates.
Section~\ref{sectest} provides basic background
concerning the topology on spaces of vector-valued test
functions, for later use.
Sections~\ref{secscavect} and \ref{secconvol}
are devoted to the proofs
of more difficult theorems.
If $M$ is a Hausdorff topological space, let $\theta(M)$ be the smallest
cardinal of a cover of $M$ by compact sets
(the compact covering number of~$M$).
We show:\\[4mm]
{\bf Theorem B.}
\emph{Let $E$ be a locally convex space
and $r\in \N_0\cup\{\infty\}$.
If $r=0$, let $M$ be a paracompact, locally compact, non-compact topological
space; if $r>0$, let $M$ be a metrizable, non-compact,
finite-dimensional $C^r$-manifold.
Then
\[
\Psi_{c,E}\colon C^r_c(M)\times E \to C^r_c(M,E),\quad (\gamma,v)\mto \gamma v
\]
is a hypocontinuous bilinear map.
The map $\Psi_{c,E}$ is continuous if and only if~$E$ has the $\theta(M)$-neighbourhood property.
If $E$ is metrizable,
then $\Psi_{c,E}$ is continuous if and only if~$E$ is normable.}\\[4mm]
Finally, we obtain
a characterization of those $(G,r,s,t,b)$ for which the\linebreak
convolution map
$\beta_b$ admits product
estimates.\\[4mm]
{\bf Theorem~C.}
\emph{Let $(G,r,s,t,b)$
and $\beta_b\colon C^r_c(G,E_1)\times C^s_c(G,E_2)\to C^t_c(G,F)$ be as in} \ref{setting}.
\emph{Then $\beta_b$ has the following properties}:
\begin{itemize}
\item
\emph{If $G$ is finite, then $\beta_b$ is always continuous.
Moreover, $\beta_b$ admits product estimates if and only if $b$ does.}
\item
\emph{If $G$ is an infinite discrete group, then $\beta_b$ admits product estimates
if and only if $\beta_b$ is continuous, which holds if and only if $G$ is countable and $b$
admits product estimates.}
\item
\emph{If $G$ is an infinite compact group, then $\beta_b$ is always continuous.
Moreover, $\beta_b$ admits product estimates
if and only if the conditions} (a), (b) \emph{and}~(c)
\emph{from Theorem}~A \emph{are satisfied.}
\item
\emph{If $G$ is neither compact nor discrete,
then $\beta_b$ admits product estimates if and only if $\beta_b$ is continuous,
which holds if and only if} (a), (b) \emph{and}~(c)
\emph{from Theorem}~A \emph{are satisfied.}\vspace{1mm}
\end{itemize}
For example, consider
a compact, non-discrete Lie group~$G$.
Then the\linebreak
convolution map
$C^0(G)\times C^\infty(G)\to C^\infty(G)$
is continuous but does not admit product estimates.\\[4mm]
In the final section, we show by example
that product estimates can also be available for non-degenerate
bilinear maps on locally convex spaces which do not admit
continuous norms.
\section{Preliminaries and basic facts}\label{secprel}
\emph{Generalities.}
We write $\N=\{1,2,\ldots\}$ and $\N_0:=\N\cup\{0\}$.
By a \emph{locally convex space}, we mean a Hausdorff locally convex real
topological vector space.
A map between topological spaces
is called a \emph{topological embedding} if it is a homeomorphism
onto its image. If $E$ is a vector space
and $p$ a seminorm on~$E$,
define
$B^p_r(x):=\{y\in E\colon p(y-x)<r\}$
and
$\wb{B}^p_r(x):=\{y\in E\colon p(y-x)\leq r\}$
for $r>0$ and $x\in E$.
Let $E_p:=E/p^{-1}(0)$ be the associated
normed space, with the norm $\|.\|_p$
given by $\|x+p^{-1}(0)\|_p:=p(x)$.
If $X$ is a set and $\gamma\colon X\to E$
a map,
we define $\|\gamma\|_{p,\infty}:=\sup_{x\in X}p(\gamma(x))$.
If $(E,\|.\|)$ is a normed space and $p=\|.\|$,
we write $\|\gamma\|_\infty$ instead of $\|\gamma\|_{p,\infty}$.\\[2.7mm]
\emph{Facts concerning direct sums.}
If $(E_i)_{i\in I}$ is a family of locally
convex spaces,
we equip the direct sum $E:=\bigoplus_{i\in I}E_i$
with the locally convex direct sum topology~\cite{Bou}.
We identify $E_i$ with its canonical
image in~$E$.
\begin{rem}\label{semisum}
If $U_i\sub E_i$ is a $0$-neighbourhood for $i\in I$,
then the
convex hull
$U:=\conv\big(\bigcup_{i\in I} U_i\big)$
is a $0$-neighbourhood in~$E$,
and a basis of $0$-neighbourhoods is obtained in this way
(as is well-known).
If $I$ is countable, then the `boxes'
$\bigoplus_{i\in I}U_i:=E\cap \prod_{i\in I}U_i$ form
a basis of $0$-neighbourhoods in~$E$ (cf.\ \cite{Jar}).
It is clear from this that the topology on~$E$ is defined
by the seminorms $q\colon E\to [0,\infty[$ taking $x=(x_i)_{i\in I}$
to $\sum_{i\in I}q_i(x_i)$, for $q_i$ ranging through the set of continuous
seminorms on~$E_i$
(because $B^q_1(0)=\conv(\bigcup_{i\in I}B^{q_i}_1(0))$.)
If $I$ is countable, we can take the seminorms $q(x):=\max\{q_i(x_i)\colon i\in I\}$
instead (because $B^q_1(0)=\bigoplus_{i\in I}B^{q_i}_1(0))$.)
\end{rem}
\emph{Some types of locally convex spaces.}
If~$E$ is a topological vector space,
we write $E_{\lcx}$ for~$E$,
equipped with the finest among those
(not necessarily Hausdorff) locally convex vector topologies which are
coarser than the original topology (see, e.g., \cite{Glo}).
A topological space~$X$ is called a \emph{$k_\omega$-space}
if $X=\dl\,K_n$\vspace{-.3mm}
as a topological space,
for a sequence $K_1\sub K_2\sub\cdots$
of compact Hausdorff spaces
with continuous inclusion maps $K_n\to K_{n+1}$
(see~\cite{Fra}, \cite{GGH}).
We write $\R^{(\N)}$
for the space of finitely supported real sequences,
equipped with the finest locally convex vector topology.
Thus $\R^{(\N)}=\bigoplus_{n\in \N}\R$.\\[2.7mm]
\emph{Hypocontinuity.}
As a special case
of more general concepts,
we call a bilinear map $\beta\colon E_1\times E_2\to F$
between locally convex spaces
\emph{hypocontinuous} in its first argument (resp., in its second argument)
if it is separately continuous and
the restriction $\beta|_{B\times E_2}\colon B\times E_2\to F$
is continuous for each bounded subset $B\sub E_1$
(resp., $\beta|_{E_1\times B}$ is continuous
for each bounded subset $B\sub E_2$).
If $\beta$ is hypocontinuous in both arguments,
it will be called \emph{hypocontinuous}.\footnote{See,
e.g.,  \cite[Proposition~16.8]{Hyp} for the equivalence
of this definition with more classical ones
(cf.\ also Proposition~4 in
\cite[Chapter III, \S5, no.\,3]{Bou}).}
\section{Spaces with neighbourhood properties}\label{secinherit}
We recall basic examples of spaces
with the $\theta$-neighbourhood property,
and some permanence properties of
the class of such spaces.
\begin{prop}\label{excubc}
\begin{itemize}
\item[\rm(a)]
A metrizable locally convex space has the cnp if and only if it is normable.
Every normable space satisfies the $\theta$-np
for each infinite cardinal~$\theta$.
\item[\rm(b)]
Let $(E_n)_{n\in\N}$ be
a sequence of locally convex spaces that have the\linebreak
cnp.
Then also the locally convex direct sum $\bigoplus_{n\in\N}E_n$
has the cnp.
\item[\rm(c)]
Let $E$ be a locally convex space
that has the $\theta$-np
for some infinite cardinal~$\theta$.
Then also each vector subspace $F\sub E$ has
the $\theta$-np.
\item[\rm(d)]
Let $\theta$ be an infinite cardinal
and $E_1,\ldots, E_n$ be locally convex spaces that have
the $\theta$-np.
Then also $E_1\times\cdots\times E_n$
has the $\theta$-np.
\item[\rm(e)]
If a locally convex space~$E$ is a $k_\omega$-space
or $E=F_{\lcx}$ for a topological vector space~$F$ which is a $k_\omega$-space,
then $E$ has the cnp.
\item[\rm(f)]
$\R^{(\N)}$
has the cnp.
\item[\rm(g)]
For each infinite cardinal $\theta$,
there exists a locally convex space $E$
that has the
$\theta$-np but does not have the $\theta'$-np for any $\theta'>\theta$.
\item[\rm(h)]
If a locally convex space $E$ has the $\theta$-np for an infinite cardinal~$\theta$,
then also $E/F$ has the $\theta$-np, for every closed vector subspace
$F\sub E$.
\item[\rm(i)]
Let $E$ be the locally convex direct limit of a countable direct system of locally convex
spaces having the cnp. Then also $E$ has the cnp. In particular,
every LB-space has the cnp.
\item[\rm(j)] Every DF-space $($and every gDF-space$)$ has the cnp.
\end{itemize}
\end{prop}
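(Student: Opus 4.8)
The plan is to prove the ten parts of Proposition~\ref{excubc} in an order that lets earlier parts feed the later ones, since several clauses are quick consequences of the general permanence properties (b)--(d), (h). I would begin with the foundational observations.

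\textbf{Part (a).}
For the first claim, if $E$ is metrizable, fix a countable basis $(p_n)_{n\in\N}$ of continuous seminorms. If $E$ has the cnp, there is a continuous seminorm $q$ dominating all $p_n$, i.e.\ $p_n\preceq q$ for each~$n$; then $\overline{B}^q_1(0)$ is a bounded $0$-neighbourhood, so~$E$ is normable. The converse and the $\theta$-np assertion for normable spaces are immediate, since on a normed space every continuous seminorm~$p$ satisfies $p\preceq\|\cdot\|$, so $\|\cdot\|$ itself is an upper bound for \emph{any} set~$M$ of continuous seminorms, regardless of~$|M|$.

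\textbf{Parts (c), (d), (h).}
These I would handle by direct manipulation of seminorms. For (c), given continuous seminorms $(p_i)_{i}$ on a subspace $F\sub E$ with $|\{p_i\}|\le\theta$, extend each to a continuous seminorm~$\widetilde p_i$ on~$E$ (using a continuous seminorm on~$E$ that restricts to dominate~$p_i$), take an upper bound~$q$ on~$E$ by the $\theta$-np of~$E$, and restrict~$q$ back to~$F$. For (d) the seminorms on a finite product are dominated by sums $p^{(1)}\oplus\cdots\oplus p^{(n)}$ of seminorms on the factors; apply the $\theta$-np in each factor to the corresponding coordinate families (each of cardinality $\le\theta$) and add the resulting bounds. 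For (h), continuous seminorms on the quotient $E/F$ pull back to continuous seminorms on~$E$ that vanish on~$F$; bound the pullbacks in~$E$ and push the bound back down, using that the upper bound may be taken to vanish on~$F$ (replace~$q$ by $x\mapsto\inf_{f\in F}q(x+f)$, the largest seminorm below~$q$ constant on $F$-cosets).

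\textbf{Parts (b), (e), (f), (i), (j).}
Part~(e) is the technical heart: for a $k_\omega$-space one writes any countable family of continuous seminorms and must produce a single continuous seminorm dominating them all, exploiting that a seminorm on a $k_\omega$-space $\dl K_n$ is continuous as soon as its restriction to each~$K_n$ is continuous, and that on each compact $K_n$ only finitely many constraints bite; a diagonal/exhaustion argument then builds the dominating seminorm level by level, and the $E=F_{\lcx}$ case follows because $P(E)=P(F)$. I expect \emph{this} to be the main obstacle, since it requires care to ensure the glued seminorm is both globally defined and continuous. Given~(e), part~(f) is immediate as $\R^{(\N)}=\bigoplus_{n}\R$ is a $k_\omega$-space. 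For (b) I would use Remark~\ref{semisum}: seminorms on a countable direct sum $\bigoplus_n E_n$ are the maps $x\mapsto\max_n q_n(x_n)$; given countably many such, for each fixed~$n$ the relevant families on~$E_n$ are countable, so the cnp of~$E_n$ yields a bound~$\overline q_n$, and $x\mapsto\max_n\overline q_n(x_n)$ dominates the lot. Part~(i) reduces to~(b) and~(h): a countable locally convex direct limit is a quotient of the direct sum $\bigoplus_n E_n$ (which has the cnp by~(b)) by a closed subspace, so~(h) applies; the LB-space case is the special instance where the $E_n$ are Banach. For~(g) I would cite or construct an example (e.g.\ a suitable power $\R^{\theta}$ or a space built from a set of cardinality~$\theta$) witnessing the exact threshold, and for~(j) invoke the standard fundamental sequence of bounded sets in a (g)DF-space, whose polars give a countable cofinal family of seminorms that provides upper bounds.
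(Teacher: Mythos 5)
Your overall architecture is sound, and several parts are handled well: (a), (c), (d) and (h) are essentially the paper's arguments (the paper proves (d) and (h) exactly as you do, via coordinate seminorms resp.\ the quotient seminorm $x+F\mapsto\inf_{f\in F}q(x+f)$, which is the Minkowski functional of $\pi(B^q_1(0))$), and your derivation of (i) from (b) and (h) --- realizing a countable Hausdorff locally convex direct limit as a quotient of $\bigoplus_n E_n$ by the (closed) kernel of the canonical surjection --- is a clean self-contained alternative to the paper's citation of Floret. However, there are three genuine gaps. First, in (b): from $q_n^{(k)}\preceq\overline{q}_n$ you only get $q_n^{(k)}\leq C_{n,k}\overline{q}_n$ with constants depending on $n$, and $\sup_n C_{n,k}$ may be infinite, so $x\mapsto\max_n\overline{q}_n(x_n)$ need \emph{not} dominate $p^{(k)}=\max_n q_n^{(k)}$ in the pre-order $\preceq$. (Take $E_n=\R$ and the single seminorm $x\mapsto\max_n n|x_n|$ on $\bigoplus_n\R$: your recipe returns $\|\cdot\|_\infty$, which does not dominate it.) The fix is the standard diagonal rescaling used elsewhere in the paper (Proposition~\ref{cubcuseful}): replace $\overline{q}_n$ by $D_n\overline{q}_n$ with $D_n:=\max\{1,C_{n,1},\ldots,C_{n,n}\}$, so that for each $k$ only the finitely many indices $n<k$ contribute a constant.

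Second, for (g) you have not actually produced an example, and the one concrete candidate you name, a power $\R^\theta$, is wrong: with the product topology every continuous seminorm is dominated by a maximum over finitely many coordinates, so $\R^\theta$ fails even the cnp (consistently with (a) for $\theta=\aleph_0$). The paper's witness is $\ell^\infty(X)$ for $|X|>\theta$ with the seminorms $\|\cdot\|_Y$ for $|Y|\leq\theta$; there the $\theta$-np holds because $\theta$ many index sets $Y_j$ have union of cardinality $\leq\theta$, while the $\theta'$-np fails for the point-evaluation seminorms at $\theta^+$ many points. Third, your justification of (j) is not a valid mechanism: a countable \emph{cofinal} family of seminorms does not provide upper bounds --- a non-normable Fr\'echet space has a cofinal increasing sequence $p_1\leq p_2\leq\cdots$ with no upper bound, by (a). What makes (g)DF-spaces work is $\aleph_0$-quasi-barrelledness: given $(p_n)_n$, rescale the closed unit balls $\{p_n\leq 1\}$ by constants chosen (via the fundamental sequence of bounded sets and a diagonal argument) so that their intersection is bornivorous, hence a $0$-neighbourhood, whose gauge is the desired upper bound. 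Part (e) is only a sketch, but the exhaustion-plus-rescaling idea you indicate ($q:=\sup_j p_j/c_j$ with $c_j$ large enough that the tail is uniformly small on each $K_n$, then continuity via the $k_\omega$-property) does go through.
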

\begin{proof}
(a) See \cite[1.1\,(i)]{Bon} and \cite[p.\,285]{Jor}.

(b) See \cite[p.\,223]{Fl2}.

(c) See \cite[p.\,285]{Jor}.

(d) Let $(p_j)_{j\in J}$ be
a family of continuous seminorms on $E:=E_1\times\cdots \times E_n$,
indexed by a set~$J$ of cardinality $|J|\leq \theta$.
Then there exist continuous seminorms~$p_{i,j}$ on~$E_i$
for $i\in\{1,\ldots, n\}$
with $p_j(x)\leq \max\{p_{1,j}(x_1),\ldots, p_{n,j}(x_n)\}$
for all $x=(x_1,\ldots, x_n)\in E$.
Since $E_i$ has the $\theta$-np,
there exists a continuous seminorm~$P_i$
on~$E_i$ such that
$P_{i,j}\preceq P_i$ for all $j\in J$,
and thus $P_{i,j}\leq C_{i,j} P_i$ with suitable $C_{i,j}>0$.
Then $p(x):=\max\{P_1(x_1),\ldots, P_n(x_n)\}$ defines a continuous seminorm~$p$
on~$E$ such that $p_j\preceq p$ for all~$j\in J$
(as $p_j\leq C_j p$ with $C_j:=\max\{C_{1,j},\ldots, C_{n,j}\}$).

(e) See \cite[Corollary~8.1]{Glo}.

(f) Since $\R^{(\N)}=\bigoplus_{n\in\N}\R$,
the assertion follows from~(a) and~(b).

(g) Let $X$ be a set of cardinality $|X|>\theta$
and $E:=\ell^\infty(X)$ be the space of all bounded real-valued functions on~$X$,
equipped with the (unusual!) topology defined by the seminorms
\[
\|.\|_Y\colon E\to [0,\infty[\,,\quad \gamma\mto \sup\{|\gamma(y)|\colon y\in Y\}\,,
\]
for $Y$ ranging through the subsets of~$X$ of cardinality
$|Y|\leq\theta$. It can be shown that $E$ has the asserted
properties (see \cite[Example~8.2]{Glo}).\footnote{That $E$ has the $\theta$-np
if $\theta=2^{\aleph_0}$ was also mentioned in \cite[p.\,285]{Jor}.}

(h) Let $\pi\colon E\to E/F$, $x\mto x+F$. If $J$ is a set of cardinality $\leq\theta$
and $(q_j)_{j\in J}$ a family of continuous seminorms on~$E/F$,
then the $q_j\circ\pi$ are continuous seminorms on~$E$, whence
there exists a continuous seminorm $p$ on $E$ and $c_j>0$ such that
$q_j\circ \pi\leq c_j p$ for all $j\in J$.
Let $q\colon E/F \to [0,\infty[$ be the Minkowski functional of $\pi(B^p_1(0))$.
Now $B^p_1(0)\sub c_j B^{q_j\circ \pi}_1(0)$
and thus also $B^p_1(0)+F\sub c_j B^{q_j\circ \pi}_1(0)$.
Hence $B^q_1(0)\sub c_j B^{q_j}_1(0)$
and thus $q_j\leq c_j q$.

(i) See \cite[p.\,223]{Fl2} for the first claim. With (a),
the final assertion follows.

(j) See \cite[Satz~1.1\,(i)]{Hol}.
\end{proof}
\section{Bilinear maps with product estimates}\label{pivot}
The folowing results provide
links between the cnp
and product estimates.
\begin{prop}\label{cubcuseful}
Let $E_1$, $E_2$ and $F$ be locally convex
spaces and
$\beta \!\!:$\linebreak
$E_1\times E_2\to F$ be a continuous bilinear map.
If~$E_1$ and~$E_2$ have the countable neighbourhood property,
then~$\beta$ satisfies product estimates.
\end{prop}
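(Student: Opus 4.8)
The plan is to combine the elementary product estimate that comes for free from continuity of $\beta$ with two applications of the countable neighbourhood property, and then to absorb a doubly-indexed family of constants into a product of one-index sequences. So I would fix an arbitrary double sequence $(p_{i,j})_{i,j\in\N}$ of continuous seminorms on~$F$ and produce the required $(p_i)$ and $(q_j)$.

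First I would record the weak form of product estimates available for \emph{any} continuous bilinear map, as already noted in the introduction: since $p_{i,j}\circ\beta$ is continuous and vanishes at $(0,0)$, evaluating continuity at $(0,0)$ on a basic box neighbourhood and rescaling in the usual fashion yields continuous seminorms $P_{i,j}$ on~$E_1$ and $Q_{i,j}$ on~$E_2$ with
\[
p_{i,j}(\beta(x,y))\le P_{i,j}(x)\,Q_{i,j}(y)\qquad(x\in E_1,\; y\in E_2).
\]
Now the family $\{P_{i,j}:i,j\in\N\}$ is countable, so the cnp of~$E_1$ furnishes a single continuous seminorm $P$ on~$E_1$ with $P_{i,j}\preceq P$, say $P_{i,j}\le C_{i,j}P$ for suitable $C_{i,j}>0$; symmetrically the cnp of~$E_2$ gives a continuous seminorm $Q$ on~$E_2$ and $D_{i,j}>0$ with $Q_{i,j}\le D_{i,j}Q$. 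Combining these gives $p_{i,j}(\beta(x,y))\le C_{i,j}D_{i,j}\,P(x)\,Q(y)$.

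The one genuine point, and the step I would flag as the crux, is that the constant $E_{i,j}:=C_{i,j}D_{i,j}$ still depends on \emph{both} indices, whereas a product estimate needs it to factor into an $i$-part times a $j$-part. For a single doubly-indexed family this is always achievable by a staircase maximum: setting $a_i:=\max\{1,\,E_{k,l}:k,l\le i\}$ (a finite maximum, hence well defined) and defining $b_j$ by the same formula, the sequence is nondecreasing with $a_n\ge 1$, so $a_ib_j=a_ia_j\ge a_{\max(i,j)}\ge E_{i,j}$. Finally I would put $p_i:=a_iP$ and $q_j:=b_jQ$, which are continuous seminorms on $E_1$ and $E_2$, and conclude
\[
p_{i,j}(\beta(x,y))\le E_{i,j}\,P(x)\,Q(y)\le a_ib_j\,P(x)\,Q(y)=p_i(x)\,q_j(y),
\]
which is exactly the desired product estimate. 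Everything apart from the two invocations of the cnp (which supply the single seminorms $P$ and $Q$) and this elementary splitting is routine bookkeeping.
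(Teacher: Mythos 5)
Your proposal is correct and follows essentially the same route as the paper: continuity gives the seminorms $P_{i,j}$, $Q_{i,j}$, the cnp of $E_1$ and $E_2$ supplies single dominating seminorms with constants $C_{i,j}$, $D_{i,j}$, and the doubly-indexed constants are then split into one-index factors by a monotone staircase maximum (the paper uses row/column maxima up to the diagonal with a case distinction $i\ge j$ versus $i<j$, while your symmetric square maximum $a_i=\max\{1,E_{k,l}\colon k,l\le i\}$ achieves the same thing slightly more cleanly). No gaps.
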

\begin{proof}
Let $p_{i,j}$ be continuous seminorms on~$F$ for $i,j\in\N$.
Since~$\beta$ is continuous bilinear, for each $(i,j)\in \N^2$
there exists a continuous seminorm~$P_{i,j}$ on~$E_1$
and a continuous seminorm~$Q_{i,j}$ on~$E_2$
such that $p_{i,j}(\beta(x,y))\leq P_{i,j}(x)Q_{i,j}(y)$ for all $(x,y)\in E_1\times E_2$.
Because~$E_1$ has the cnp,
there exists a continuous seminorm~$p$ on~$E_1$ such that
$P_{i,j}\preceq p$ for all $i,j\in\N$.
Likewise,
there exists a continuous seminorm~$q$ on~$E_2$ such that
$Q_{i,j}\preceq q$ for all $i,j\in\N$.
Thus, for $i,j\in \N$ there are $r_{i,j},s_{i,j}\in \,]0,\infty[$
such that $P_{i,j}\leq r_{i,j}p$ and $Q_{i,j}\leq s_{i,j}q$.
For $i\in\N$, let $a_i$ be the maximum of $1, r_{i,1}s_{i,1},\ldots, r_{i,i}s_{i,i}$,
and define $p_i:=a_ip$.
For $j\in\N$, let $b_j$ be the maximum of $1, r_{1,j}s_{1,j},\ldots, r_{j-1,j}s_{j-1,j}$,
and define $q_j:=b_j q$.
Let $i,j\in \N$. If $i\geq j$, then
\[
p_{i,j}(\beta(x,y))\leq P_{i,j}(x)Q_{i,j}(y)\leq r_{i,j}s_{i,j}p(x)q(y)
\leq a_ip(x)q(y)\leq p_i(x)q_j(y)
\]
for all $x\in E_1$ and $y\in E_2$.
If $i<j$, then $p_{i,j}(\beta(x,y))\leq r_{i,j}s_{i,j}p(x)q(y)
\leq b_jp(x)q(y)\leq p_i(x)q_j(y)$. Thus $\beta$ satisfies product estimates.
\end{proof}
Combining
Proposition~\ref{excubc}\,(a) and Proposition~\ref{cubcuseful}, we obtain:
\begin{cor}\label{examp1}
If $(E_1,\|.\|_1)$ and $(E_2,\|.\|_2)$
are normed spaces, then every continuous bilinear
map $\beta\colon E_1\times E_2\to F$ to a locally convex space $F$
admits product estimates.\,\Punkt
\end{cor}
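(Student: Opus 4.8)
The plan is to deduce this immediately by combining the two cited propositions, so that no new work is required beyond checking that normed spaces fall under their hypotheses. First I would observe that a normed space $(E,\|.\|)$ is by definition normable, its topology being generated by the single norm $\|.\|$. Proposition~\ref{excubc}\,(a) then tells me that every normable space enjoys the $\theta$-np for every infinite cardinal~$\theta$; specialising to $\theta=\aleph_0$ yields the countable neighbourhood property, since the $\aleph_0$-np is, by definition, exactly the cnp (a countable family of seminorms is a family of cardinality $\leq\aleph_0$). Hence both $(E_1,\|.\|_1)$ and $(E_2,\|.\|_2)$ have the cnp.

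With this in hand, the hypotheses of Proposition~\ref{cubcuseful} are met: $\beta$ is continuous by assumption, and $E_1$, $E_2$ both have the cnp. That proposition then delivers the desired product estimates verbatim. I do not anticipate any genuine obstacle here, as all the substantive work---the diagonal bookkeeping that turns the double sequence of seminorms into single sequences---already lives inside the proof of Proposition~\ref{cubcuseful}; the only point requiring care is the (purely definitional) identification of the $\aleph_0$-neighbourhood property with the countable neighbourhood property.

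Alternatively, one could prove the statement directly without invoking the cnp: continuity of $\beta$ supplies, for a given double sequence $(p_{i,j})$ of continuous seminorms on~$F$, constants $C_{i,j}>0$ with $p_{i,j}(\beta(x,y))\leq C_{i,j}\|x\|_1\|y\|_2$, and one then sets $p_i:=a_i\|.\|_1$, $q_j:=b_j\|.\|_2$ with $a_i:=\max\{1,C_{i,1},\ldots,C_{i,i}\}$ and $b_j:=\max\{1,C_{1,j},\ldots,C_{j-1,j}\}$, so that $a_ib_j\geq C_{i,j}$ in both cases $i\geq j$ and $i<j$. This is just the diagonal trick of Proposition~\ref{cubcuseful} specialised to norms, which is why the corollary is best seen simply as that proposition combined with Proposition~\ref{excubc}\,(a).
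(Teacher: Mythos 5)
Your proposal is correct and follows exactly the paper's route: the corollary is obtained by noting that normed spaces are normable, hence have the cnp by Proposition~\ref{excubc}\,(a), and then applying Proposition~\ref{cubcuseful}. The alternative direct argument you sketch is just that proposition's diagonal trick specialised to norms, so nothing further is needed.
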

\begin{cor}\label{Rinfty}
Every bilinear map from
$\R^{(\N)}\times \R^{(\N)}$ to a locally convex space
admits product estimates.
\end{cor}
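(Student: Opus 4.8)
The plan is to reduce the statement to a purely combinatorial domination problem and to solve that by a diagonal argument. Write $\R^{(\N)}=\bigoplus_{n\in\N}\R$ with standard basis $(e_n)_{n\in\N}$, so that every $x\in\R^{(\N)}$ has finite support and a bilinear map $\beta\colon\R^{(\N)}\times\R^{(\N)}\to F$ is completely determined by the vectors $\beta(e_k,e_l)\in F$, via the finite sum $\beta(x,y)=\sum_{k,l}x_ky_l\,\beta(e_k,e_l)$. Fix a double sequence $(p_{i,j})_{i,j\in\N}$ of continuous seminorms on $F$ and set $c^{i,j}_{k,l}:=p_{i,j}(\beta(e_k,e_l))\in[0,\infty[$. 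Then the triangle inequality and absolute homogeneity give $p_{i,j}(\beta(x,y))\le\sum_{k,l}|x_k|\,|y_l|\,c^{i,j}_{k,l}$ for all $x,y$, so everything is controlled by the nonnegative four-index array $(c^{i,j}_{k,l})$.

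Next I would look for the required $p_i,q_j$ among weighted $\ell^1$-type seminorms. Precisely, I aim to produce strictly positive reals $a^i_k$ and $b^j_l$ ($i,j,k,l\in\N$) with
\[
c^{i,j}_{k,l}\le a^i_k\,b^j_l\qquad\text{for all }i,j,k,l\in\N .
\]
Granting this, define $p_i(x):=\sum_k a^i_k|x_k|$ and $q_j(y):=\sum_l b^j_l|y_l|$. Each $p_i$ is a seminorm (a finite sum on every $x$) and is continuous, since every seminorm on $\R^{(\N)}$ is continuous for the finest locally convex topology; concretely its open unit ball contains the box $\{x:|x_k|<2^{-k}/(a^i_k+1)\}$, which is a $0$-neighbourhood. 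The same holds for $q_j$. Then, term by term,
\[
p_{i,j}(\beta(x,y))\le\sum_{k,l}|x_k|\,|y_l|\,c^{i,j}_{k,l}\le\sum_{k,l}a^i_k|x_k|\,b^j_l|y_l|=p_i(x)\,q_j(y),
\]
which is exactly the product estimate.

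The main obstacle is the existence of the weights, i.e.\ the following lemma: for countable index sets $U,V$ and any $A\colon U\times V\to[0,\infty[$ there exist $g\colon U\to\,]0,\infty[$ and $h\colon V\to\,]0,\infty[$ with $A_{u,v}\le g_u h_v$. I would prove this by a diagonal construction: enumerate $U=\{u_1,u_2,\dots\}$, $V=\{v_1,v_2,\dots\}$, put $h_{v_n}:=\max\{1,A_{u_1,v_n},\dots,A_{u_n,v_n}\}$, and then $g_{u_m}:=\max\{1,\sup_{n}A_{u_m,v_n}/h_{v_n}\}$. The supremum is finite because $A_{u_m,v_n}/h_{v_n}\le 1$ for all $n\ge m$ by construction, leaving only the finitely many (finite) ratios with $n<m$ to beat; by construction $A_{u_m,v_n}\le g_{u_m}h_{v_n}$ for all $m,n$. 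I then apply this with the countable super-index sets $U:=\N^2\ni(i,k)$ and $V:=\N^2\ni(j,l)$ and $A_{(i,k),(j,l)}:=c^{i,j}_{k,l}$, reading off $a^i_k:=g_{(i,k)}$ and $b^j_l:=h_{(j,l)}$. Reducing the four-index array to a two-index array over countable super-indices is the conceptual step that makes the diagonal argument go through.

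Finally, I note a shortcut that bypasses the explicit construction: by Proposition~\ref{excubc}(f) the space $\R^{(\N)}$ has the cnp, so by Proposition~\ref{cubcuseful} it would suffice to know that every bilinear map $\R^{(\N)}\times\R^{(\N)}\to F$ is continuous. Since $\R^{(\N)}$ is a $k_\omega$-space and the product carries the inductive limit topology of the finite-dimensional steps $\R^m\times\R^n$, continuity reduces to continuity on each such step, which is automatic. Either route works; I expect the diagonal domination estimate to be the real content in both, so I would present the self-contained combinatorial argument as the primary proof and mention the reduction to Proposition~\ref{cubcuseful} as a remark.
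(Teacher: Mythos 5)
Your proposal is correct, and your primary argument takes a genuinely different route from the paper. The paper's proof is exactly your closing ``shortcut'': it cites the nontrivial topological fact that $\R^{(\N)}\times \R^{(\N)}=\dl\,\R^n\times\R^n$ (so that bilinearity already forces continuity), and then combines Proposition~\ref{excubc}\,(f) with Proposition~\ref{cubcuseful}. Your main argument instead builds the seminorms $p_i$, $q_j$ by hand: expanding $\beta$ over the standard basis, reducing to the domination problem $c^{i,j}_{k,l}\leq a^i_k b^j_l$, and solving that by merging the four indices into two countable super-indices and running a diagonal construction. That diagonal lemma is essentially the lemma of Bisgaard which the paper itself invokes in the proof of Proposition~\ref{newprop} (there only for a two-index array $C_{i,j}$); your added observation is that the four-index problem collapses to the two-index one over $\N^2\times\N^2$. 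What your route buys: it is self-contained and elementary, it bypasses both the cited inductive-limit identification of the product topology and the cnp machinery, and it produces the seminorms explicitly as weighted $\ell^1$-norms, which are automatically continuous for the finest locally convex topology (your box argument for this is fine, as is the finiteness of $\sup_n A_{u_m,v_n}/h_{v_n}$: the terms with $n\geq m$ are $\leq 1$ by construction and only finitely many others remain). What the paper's route buys: brevity given Propositions~\ref{excubc} and \ref{cubcuseful}, and it exhibits the corollary as an instance of the general principle that continuity plus the cnp of the factors yields product estimates.
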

\begin{proof}
It is well-known that
$\R^{(\N)}\times \R^{(\N)}
=\dl\, \R^n\times \R^n$\vspace{-.3mm}
as a topological space
(cf.\ \cite{Bis} and \cite[Theorem~4.1]{Hir}).
Since bilinear maps on $\R^n\times\R^n$
are always continuous, it follows that
every bilinear map $\beta$ from
$\R^{(\N)}\times \R^{(\N)}$
to a locally convex space is continuous.
Combining
Proposition~\ref{excubc}\,(f)
and Proposition~\ref{cubcuseful},
we deduce that~$\beta$ admits product estimates.
\end{proof}
\begin{rem}
The condition described in Proposition~\ref{cubcuseful}
is sufficient, but not necessary for product estimates.
For example, consider the convolution map
$\beta\colon C^\infty(K)\times C^\infty(K) \to C^\infty(K)$
on a non-discrete, compact Lie group~$K$.
Then $\beta$ satisfies product estimates
(by Theorem~C).
However, $C^\infty(K)$ is a non-normable,
metrizable space, and therefore does not have the cnp
(see Proposition~\ref{excubc}\,(a)).
\end{rem}
The next result was stimulated by a remark of C. Bargetz.\footnote{In a conversation
from May 11, 2012, C. Bargetz explained to the author that $\beta_b$ in Theorem~A
is continuous if $G=\R^n$, $r=s=t=\infty$ and $F$ is a quasicomplete DF-space,
as a consequence of a result on topological tensor products
by L. Schwartz and a result from his thesis~\cite{Bar}.
Since every DF-space has the cnp (cf.\ \cite[Satz~1.1(i)]{Hol}),
Proposition~\ref{newprop} shows that Bargetz' hypotheses
are subsumed by Theorem~A.}
\begin{prop}\label{newprop}
Let $E_1$, $E_2$ and $F$ be locally convex spaces.
If $F$ has the countable neighbourhood property,
then every continuous bilinear map $\beta\colon E_1\times E_2\to F$
admits product estimates.
\end{prop}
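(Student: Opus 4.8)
The plan is to exploit the countable neighbourhood property of~$F$ at the very start, collapsing the entire double sequence of target seminorms into a \emph{single} seminorm on~$F$ before invoking continuity of~$\beta$. This is what replaces the twofold use of the cnp (on $E_1$ and $E_2$ separately) in Proposition~\ref{cubcuseful}.

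First I would fix an arbitrary double sequence $(p_{i,j})_{i,j\in\N}$ of continuous seminorms on~$F$. Since $\{p_{i,j}\colon i,j\in\N\}$ is a \emph{countable} set of continuous seminorms and $F$ has the cnp, there is a continuous seminorm~$p$ on~$F$ with $p_{i,j}\preceq p$ for all $i,j\in\N$; that is, $p_{i,j}\leq C_{i,j}\,p$ for suitable constants $C_{i,j}>0$. Next I would apply continuity of~$\beta$, but only to this single seminorm~$p$: joint continuity at $(0,0)$ together with bilinearity yields continuous seminorms~$P$ on~$E_1$ and~$Q$ on~$E_2$ with $p(\beta(x,y))\leq P(x)Q(y)$ for all $(x,y)\in E_1\times E_2$. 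Combining the two estimates gives
\[
p_{i,j}(\beta(x,y))\leq C_{i,j}\,P(x)\,Q(y)\qquad (i,j\in\N,\ x\in E_1,\ y\in E_2).
\]

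It then remains only to absorb the constants $C_{i,j}$ into a product $a_ib_j$, by the same elementary diagonal device as in Proposition~\ref{cubcuseful}: set $a_i:=\max\{1,C_{i,1},\ldots,C_{i,i}\}$ and $b_j:=\max\{1,C_{1,j},\ldots,C_{j,j}\}$, so that $a_ib_j\geq C_{i,j}$ for every pair (if $i\geq j$ the factor $C_{i,j}$ appears in~$a_i$, while if $i<j$ it appears in~$b_j$). Putting $p_i:=a_iP$ and $q_j:=b_jQ$ then produces $p_{i,j}(\beta(x,y))\leq p_i(x)q_j(y)$ for all $i,j,x,y$, which is exactly the asserted product estimate.

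I expect no serious obstacle here: the argument is in fact shorter than that of Proposition~\ref{cubcuseful}, since continuity of~$\beta$ is invoked only once (for~$p$) rather than once per pair $(i,j)$, and no diagonal splitting of the continuity data into $i$- and $j$-dependent pieces is needed. The only point requiring a moment's care is verifying that the max-based choice of $(a_i)$ and $(b_j)$ really dominates $C_{i,j}$ for every pair, which is the routine bookkeeping indicated above.
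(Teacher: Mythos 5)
Your proof is correct and follows essentially the same route as the paper: both first use the cnp of $F$ to dominate all $p_{i,j}$ by a single continuous seminorm on $F$ with constants $C_{i,j}$, and then invoke continuity of $\beta$ only once for that seminorm. The sole difference is how $C_{i,j}$ is absorbed into a product $a_ib_j$ --- the paper cites a lemma of Bisgaard producing $c_i>0$ with $C_{i,j}\leq 1/(c_ic_j)$, whereas you use the explicit max-based bookkeeping (the same device the paper itself employs in Proposition~\ref{cubcuseful}), and your verification that $a_ib_j\geq C_{i,j}$ in both cases $i\geq j$ and $i<j$ is sound.
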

\begin{proof}
If $p_{i,j}$ are continuosu seminorms on~$F$ for
$i,j\in \N$, then the cnp of $F$ provides a continuous seminorm~$P$ on~$F$ and
real numbers $C_{i,j}>0$ such that $p_{i,j}\leq C_{i,j}P$ for all $i,j\in \N$.
Since $\beta$ is continuous, there exist continuous seminorms~$p$ on~$E_1$
and~$q$ on~$E_2$ such that $P(\beta(x,y))\leq p(x)q(y)$ for all $x\in E_1$ and $y\in E_2$.
By the lemma in~\cite{Bis}, there are $c_i>0$ for $i\in \N$ such that
$c_ic_j\leq 1/C_{i,j}$ for all $i,j\in \N$, and that $C_{i,j}\leq \frac{1}{c_ic_j}$.
Define $p_i:=\frac{1}{c_i}p$ and $q_j:=\frac{1}{c_j}q$.
Then $p_{i,j}(\beta(x,y))\leq C_{i,j}P(\beta(x,z))\leq C_{i,j}p(x)q(y)\leq \frac{1}{c_ic_j}p(x)q(y)\leq
p_i(x)q_j(y)$ for all $x\in E_1$ and $y\in E_2$, as required.
\end{proof}
For later use, let us record some obvious facts:
\begin{la}\label{laobvi}
Let $E_1$, $E_2$, $F$, $X_1$, $X_2$ and $Y$
be locally convex spaces,
$\beta:$\linebreak
$E_1\times E_2\to F$
be a continuous bilinear map
and $\lambda_1\colon X_1\to E_1$,
$\lambda_2 \colon X_2\to E_1$
and $\Lambda\colon F\to Y$ be continuous linear maps.
\begin{itemize}
\item[\rm(a)]
If $\beta$ admits product estimates,
then also $\Lambda\circ\beta$ and $\beta\circ (\lambda_1\times \lambda_2)$
admit product estimates.
\item[\rm(b)]
If $\Lambda$ is a topological embedding,
then $\beta$ admits product estimates if and only if $\Lambda\circ \beta$
admits product estimates.\,\Punkt
\end{itemize}
\end{la}
\section{Bilinear maps without product estimates}\label{secno}
We give two elementary examples of continuous bilinear maps
not admitting product estimates.
Further examples are provided by Theorems~B and~C.
\begin{example}\label{examp3}
We endow the direct power $A:=\R^\N$ with the product\linebreak
topology (of pointwise convergence),
which makes it a Fr\'{e}chet space and can be defined using the seminorms
\[
\|.\|_n\colon \R^\N\to [0,\infty[\,,\quad \|(x_i)_{i\in \N}\|_n:=\max\{|x_i|\colon 1\leq i\leq n\}
\]
for $n\in \N$.
Let $\beta\colon A\times A\to A$, $((x_i)_{i\in \N},(y_i)_{i\in \N})\mto (x_iy_i)_{i\in \N}$
be pointwise multiplication.
Then $\beta$ is a bilinear map and continuous, as $\|\beta(x,y)\|_n\leq \|x\|_n\|y\|_n$
for all $n\in \N$ and $x,y\in A$.
The map $\beta$ (which turns $A$ into a non-unital associative topological algebra)
does not satisfy product estimates.\\
To see this, consider the continuous seminorms $p_{i,j}:=\|.\|_{i+j}$ on~$A$.
Let $(p_i)_{i\in \N}$ and $(q_i)_{i\in \N}$
be any sequences of continuous seminorms on~$A$.
Then $p_1\leq r\|.\|_n$ for some $r>0$ and some $n\in \N$.
Let $e_{n+1}=(0,\ldots, 1,0,\ldots)\in A$ be the sequence with a single non-zero entry $1$
at position $n+1$.
Then
\[
p_{1,n}(\beta(e_{n+1},e_{n+1}))
=p_{1,n}(e_{n+1})
=\|e_{n+1}\|_{n+1}=1\,.
\]
However, $p_1(e_n)q_n(e_{n+1})\leq r\|e_{n+1}\|_nq_n(e_{n+1})=0q_n(e_{n+1})=0$.
Therefore $p_{1,n}(\beta(e_{n+1},e_{n+1}))>p_1(e_{n+1})q_n(e_{n+1})$,
and $\beta$ cannot have product estimates.
\end{example}
\begin{example}\label{examp4}
Consider the Fr\'{e}chet space $A:=C^\infty[0,1]:=C^\infty([0,1],\R)$,
whose vector topology is defined by the seminorms
\[
\|.\|_{C^k}\colon C^\infty[0,1]\to[0,\infty[\,,\quad \|\gamma\|_{C^k}:=\max\{\|\gamma^{(j)}\|_\infty\colon 0\leq j\leq k\}
\]
for $k\in \N_0$.
The Leibniz rule for derivatives of products implies that the bilinear pointwise multiplication map
$\beta\colon C^\infty[0,1]\times C^\infty[0,1]\to C^\infty[0,1]$, $\beta(\gamma,\eta):=\gamma\cdot\eta$
with $(\gamma\cdot\eta)(x):=\gamma(x)\eta(x)$
is continuous (since $\|\beta(\gamma,\eta)\|_{C^k}\leq 2^k\|\gamma\|_{C^k}\|\eta\|_{C^k}$), as is well-known.
We now show that $\beta$ does not satisfy product estimates.
To see this, let $p_{i,j}:=\|.\|_{C^{i+j}}$ for $i,j\in \N$.
Suppose that there exist continuous seminorms $p_i$ and $q_i$ on~$A$ for $i\in \N$, such that
\[
p_{i,j}(\beta(\gamma,\eta))\leq p_i(\gamma)q_j(\eta)\quad\mbox{for all $i,j\in \N$.}
\]
We derive a contradiction.
After increasing $p_1$, we may assume that $p_1=r\|.\|_{C^k}$ for some $r>0$ and some $k\in \N_0$.
Let $h\in A$ be a function whose restriction to $[\frac{1}{4},\frac{3}{4}]$ is identically~$1$.
For each $\gamma\in A$ with support $\Supp(\gamma)\sub [\frac{1}{4},\frac{3}{4}]$,
we then have
\[
\|\gamma\|_{C^{k+1}}=
\|\gamma\cdot h\|_{C^{k+1}}
=p_{1,k}(\gamma\cdot h)
\leq p_1(\gamma)q_k(h)
\leq K \|\gamma\|_{C^k}
\]
with $K:=r q_k(h)$.
Let $g\in C^\infty_c(\R)$ with $g(0)\not=0$
and $\Supp(g)\sub [{-\frac{1}{4}},\frac{1}{4}]$.
Then $g^{(j)}\not=0$ for all $j\in \N_0$
(because otherwise $g$ would be a polynomial and hence
not compactly supported, contradiction).
For $t\in \,]0,1]$, define $g_t\in A$ via
$g_t(x):=t^k g((x-\frac{1}{2})/t)$.
Then $g_t^{(j)}(x)=t^{k-j}g^{(j)}((x-\frac{1}{2})/t)$ for each $j\in \N_0$,
entailing that $S:=\sup\{\|g_t\|_{C^k}\colon t\in \,]0,1]\}<\infty$
and $\|g_t\|_{C^{k+1}}\geq \|g_t^{(k+1)}\|_\infty=t^{-1}\|g^{(k+1)}\|_\infty\to\infty$
as  $t\to 0$.
This contradicts the estimate
$\|g_t\|_{C^{k+1}}\leq K\|g_t\|_{C^k}\leq KS$.
\end{example}
\section{Spaces of vector-valued test functions}\label{sectest}
In this section, we compile preliminaries
concerning spaces of vector-valued test functions,
for later use. The proofs can be found in~\cite{BaG}.\\[2.7mm]
The manifolds considered in this article are finite-dimensional, smooth
and metrizable (but not necessarily $\sigma$-compact).\footnote{Recall that a manifold
is metrizable if and only if it is paracompact,
as follows, e.g.,  from \cite[Theorem II.4.1]{BaP}.}
The Lie groups considered are finite-dimensional, real Lie groups.\\[2.7mm]
\emph{Vector-valued $C^r$-maps on manifolds.}
If $r\in \N_0\cup\{\infty\}$,
$U\sub \R^n$ is open and $E$ a locally convex space,
then a map $\gamma\colon U\to E$ is called $C^r$ if the partial derivatives
$\partial^\alpha \gamma\colon U\to E$ exist and are continuous,
for all multi-indices $\alpha=(\alpha_1,\ldots,\alpha_n)\in \N_0^n$ such that $|\alpha|:=\alpha_1+\cdots+\alpha_n\leq r$.
If $V\sub \R^n$ is open and $\tau\colon V\to U$
a $C^r$-map, then also $\gamma\circ\tau$ is~$C^r$
(as a special case of infinite-dimensional
calculus as in
\cite{Mil}, \cite{Ham},
\cite{RES}, or \cite{GaN}).
It therefore makes sense to consider
$C^r$-maps from
manifolds to locally convex spaces.
If $M$ is a manifold and
$\gamma\colon M\to E$ a $C^1$-map to a locally convex space,
we write $d\gamma$ for the second component
of the tangent map $T\gamma\colon TM\to TE\isom E\times E$.
If $X\colon M\to TM$ is a smooth vector field on~$M$ and $\gamma$ as before,
we write
\[
X.\gamma:=d\gamma\circ X\,.
\]
\emph{The topology on $C^r_c(M,E)$.}
Let $r\in \N_0\cup\{\infty\}$ and $E$ be a locally convex space.
If $r=0$, let
$M$ be a (Hausdorff) locally compact space,
and equip the space $C^0(M,E):=C(M,E)$ of continuous $E$-valued
functions on~$M$
with the compact-open topology.
If $r>0$, let $M$ be a $C^r$-manifold.
Set $d^0\gamma:=\gamma$,
$T^0M:=M$,
$T^kM:=T(T^{k-1}M)$ and $d^k\gamma:=d(d^{k-1}\gamma)\colon T^kM\to E$
for $k\in\N$ with $k\leq r$.
Equip $C^r(M,E)$ with the initial topology with respect to the maps
$d^k\colon C^r(M,E)\to C(T^kM, E)$ for $k\in \N_0$ with $k\leq r$,
where $C(T^k(M),E)$ is equipped with the compact-open topology.
Returning to $r\in \N_0\cup\{\infty\}$,
endow the space
$C^r_K(M,E):=\{\gamma\in C^r(M,E)\colon \text{supp}(\gamma)\sub K\}$\linebreak
with the topology induced by $C^r(M,E)$,
for each compact subset~$K$\linebreak
of~$M$.
Let $\cK(M)$ be the set of compact subsets of~$M$.
Equip $C^r_c(M,E):=\bigcup_{K\in \cK(M)}\, C^r_K(M,E)$
with the locally convex direct limit topology.
Then $C^r_c(M,E)$ is Hausdorff (because the inclusion map
$C^r_c(M,E)\to C^r(M,E)$ is continuous).
As usual, we abbreviate $C^r(M):=C^r(M,\R)$,
$C^r_K(M):=C^r_K(M,\R)$ and
$C^r_c(M):=C^r_c(M,\R)$.
The following fact is well-known (see, e.g., \cite[Proposition~4.4]{GCX}):
\begin{la}\label{opa}
If $U\sub \R^n$ is open, $K\sub U$
compact and $r\in \N_0\cup\{\infty\}$, then the
topology on $C^r_K(U,E)$
arises from the seminorms $\|.\|_{k,p}$ defined via
\[
\|\gamma\|_{k,p}:=\max\{\|\partial^\alpha\gamma\|_{p,\infty}\colon \alpha\in \N_0^n,\,|\alpha|\leq k\},
\]
for all $k\in \N_0$ with $k\leq r$ and continuous seminorms $p$ on $E$.
\end{la}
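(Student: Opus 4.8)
The plan is to show that the two topologies on $C^r_K(U,E)$ are defined by mutually dominating families of seminorms. The initial topology with respect to the maps $d^k\colon C^r_K(U,E)\to C(T^kU,E)$ ($k\le r$), each target carrying the compact-open topology, is generated by the seminorms $\gamma\mapsto\sup_{z\in L}p(d^k\gamma(z))$ with $L\subseteq T^kU$ compact and $p$ a continuous seminorm on $E$. So it suffices to compare, for each $k\le r$, this family with the family $\|.\|_{k,p}$.

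The technical heart is the relation between the iterated tangent map $d^k\gamma$ and the partial derivatives $\partial^\alpha\gamma$. Recall that $T^kU$ is an open subset of $\R^{2^kn}$ with base projection $\pi\colon T^kU\to U$, that $d^k\gamma=d(d^{k-1}\gamma)$ under the identification $TE\cong E\times E$, and that for open $V\subseteq\R^N$ one has $df(v;w)=\sum_{j}w_j\,\partial_jf(v)$. Inductively on $k$ this yields an explicit expansion
\[
d^k\gamma(z)=\sum_{|\alpha|\le k}c_\alpha(z)\,\partial^\alpha\gamma(\pi(z)),
\]
with each coefficient $c_\alpha$ a polynomial in the fibre coordinates of $z$, independent of $\gamma$. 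Conversely, by choosing $z$ over a base point $x$ with fibre coordinates equal to suitable coordinate vectors $e_i$ (and zeros in the remaining slots), each individual $\partial^\alpha\gamma(x)$ with $|\alpha|\le k$ is recovered as a fixed finite linear combination of values $d^k\gamma(z)$. Establishing this correspondence cleanly — both the forward polynomial expansion and the inverse recovery of \emph{every} $\partial^\alpha$ — is the main obstacle and the part demanding care in the book-keeping of the iterated tangent bundle; I would carry it out by induction on $k$ from the directional-derivative formula, as indicated.

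Granting this, both inclusions are routine. For the domination showing each $\|.\|_{k,p}$ is continuous in the initial topology, fix $\alpha$ with $|\alpha|\le k$; the recovery formula gives fibre vectors $w_1,\dots,w_m$ and scalars $\lambda_1,\dots,\lambda_m$ with $\partial^\alpha\gamma(x)=\sum_j\lambda_j\,d^k\gamma(z_j(x))$, where $z_j(x)\in T^kU$ has base $x$ and fibre $w_j$. Since $\gamma$ (hence $\partial^\alpha\gamma$) is supported in $K$, the set $L:=\{z_j(x):x\in K,\ 1\le j\le m\}$ is compact in $T^kU$, and
\[
\|\partial^\alpha\gamma\|_{p,\infty}=\sup_{x\in K}p(\partial^\alpha\gamma(x))\le\Big(\sum_j|\lambda_j|\Big)\sup_{z\in L}p(d^k\gamma(z)).
\]
Taking the maximum over $|\alpha|\le k$ bounds $\|\gamma\|_{k,p}$ by a finite sum of initial-topology seminorms.

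For the reverse domination, fix a compact $L\subseteq T^kU$ and a continuous seminorm $p$ on $E$, and set $C:=\max\{\,|c_\alpha(z)|:z\in L,\ |\alpha|\le k\,\}<\infty$, finite by continuity of the $c_\alpha$ and compactness of $L$. Then for every $\gamma$,
\[
\sup_{z\in L}p(d^k\gamma(z))\le C\sum_{|\alpha|\le k}\|\partial^\alpha\gamma\|_{p,\infty}\le C\,N_k\,\|\gamma\|_{k,p},
\]
where $N_k$ is the number of multi-indices $\alpha$ with $|\alpha|\le k$; thus each generating seminorm of the initial topology is dominated by a multiple of $\|.\|_{k,p}$. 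As these estimates hold for every $k\le r$ (for $r=\infty$ one simply ranges over all $k\in\N_0$ on both sides), the two families of seminorms are mutually dominating and therefore define the same topology on $C^r_K(U,E)$, which completes the argument.
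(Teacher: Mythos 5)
The paper does not prove Lemma~\ref{opa} at all: it records the statement as well known and refers to \cite[Proposition~4.4]{GCX}. Your argument is essentially the one underlying that reference --- expand $d^k\gamma(z)$ as a linear combination of the $\partial^\alpha\gamma(\pi(z))$ with coefficients polynomial in the fibre coordinates of $z\in T^kU\cong U\times\R^{(2^k-1)n}$, and conversely recover the partial derivatives by evaluating iterated differentials at points whose fibre slots are coordinate vectors --- so the route is the intended one, and both domination estimates are set up correctly (in particular, compactness of your set $L=K\times\{w_1,\dots,w_m\}$ and the use of $\Supp(\partial^\alpha\gamma)\sub K$ to replace the supremum over $U$ by one over $K$ are fine).

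One point needs correcting. For $k\geq 1$ every term of the expansion of $d^k\gamma(z)$ carries at least one derivative of $\gamma$, i.e.\ only $\partial^\alpha\gamma(\pi(z))$ with $1\leq|\alpha|\leq k$ occur; hence $\partial^0\gamma=\gamma$ itself cannot be written as a linear combination of values of $d^k\gamma$, contrary to your claim that \emph{every} $\partial^\alpha\gamma$ with $|\alpha|\leq k$ is so recovered. This costs nothing, because the initial topology is taken with respect to all maps $d^j$ for $j\in\N_0$ with $j\leq r$: for $|\alpha|=m$ recover $\partial^\alpha\gamma(x)$ from values of $d^m\gamma$ over $x$ (and from $d^0\gamma=\gamma$ when $\alpha=0$); your set $L$ and the resulting estimate then read verbatim with $d^m$ in place of $d^k$, and the maximum over $|\alpha|\leq k$ is still bounded by finitely many seminorms of the initial topology. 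With that adjustment, and granting the inductive bookkeeping you explicitly defer (which is routine but is indeed the substantive content), the proof is complete.
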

In the next three lemmas
(which are Lemmas 1.3, 1.14 and 1.15 from \cite{BaG}, respectively),
we let $E$ be a locally convex space and $r\in \N_0\cup\{\infty\}$.
If $r=0$, we let $M$ be a locally compact space.
If $r>0$, then
$M$ is a manifold.
\begin{la}\label{lcsum}
Let $(h_j)_{j\in J}$ be a family of
functions $h_j\in C^r_c(M)$
whose\linebreak
supports $K_j:=\Supp(h_j)$
form a locally finite family.
Then the map
\[
\Phi\colon C^r_c(M,E)\to\bigoplus_{j\in J}C^r_{K_j}(M,E)
\,,\quad \gamma\mapsto (h_j\cdot \gamma)_{j\in J}
\]
is continuous and linear. If $(h_j)_{j\in J}$ is a partition of unity $($i.e., $h_j\geq 0$ and
$\sum_{j\in J}h_j=1$ pointwise$)$, then~$\Phi$ is a topological embedding.
\end{la}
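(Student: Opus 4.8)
The plan is to verify the two assertions separately: continuity and linearity of $\Phi$ for an arbitrary locally finite family, and the topological-embedding claim when $(h_j)_{j\in J}$ is a partition of unity. Linearity is immediate since $\gamma\mapsto h_j\cdot\gamma$ is linear for each fixed $j$. For continuity, I would exploit the locally convex direct limit structure of both sides: a linear map out of $C^r_c(M,E)=\bigcup_{K}C^r_K(M,E)$ is continuous if and only if its restriction to each $C^r_K(M,E)$ (for $K\in\cK(M)$ compact) is continuous. So fix a compact $K\sub M$ and consider $\Phi|_{C^r_K(M,E)}$. The crucial observation is that local finiteness of the supports $K_j$ means only finitely many $K_j$ meet~$K$; for all other indices $j$, the map $\gamma\mapsto h_j\cdot\gamma$ vanishes on $C^r_K(M,E)$. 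Hence on $C^r_K(M,E)$ the map $\Phi$ lands in the finite subsum $\bigoplus_{j\,:\,K_j\cap K\neq\emptyset}C^r_{K_j}(M,E)$, and into this finite direct sum (which is just a finite product topologically) continuity is checked coordinatewise: each $\gamma\mapsto h_j\cdot\gamma$, $C^r_K(M,E)\to C^r_{K_j}(M,E)$, is continuous because multiplication by the fixed function $h_j\in C^r_c(M)$ is a continuous linear self-map of $C^r$-type spaces (controllable via the seminorms of Lemma~\ref{opa} in charts, using the Leibniz rule exactly as in Example~\ref{examp4}).

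For the embedding claim, I would first note that when $(h_j)_{j\in J}$ is a partition of unity we have a natural candidate for a continuous left inverse, namely the summation map $\sigma\colon\bigoplus_{j\in J}C^r_{K_j}(M,E)\to C^r_c(M,E)$, $(\gamma_j)_{j\in J}\mapsto\sum_{j\in J}\gamma_j$. On the direct sum only finitely many $\gamma_j$ are nonzero, so the sum is finite and well defined; and $\sigma\circ\Phi=\mathrm{id}$ because $\sum_j h_j\cdot\gamma=(\sum_j h_j)\cdot\gamma=\gamma$ by the partition-of-unity property. It remains to check that $\sigma$ is continuous: by the universal property of the locally convex direct sum, it suffices that each coordinate inclusion $C^r_{K_j}(M,E)\hookrightarrow C^r_c(M,E)$ be continuous, which holds by definition of the direct limit topology. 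Continuity of a map $\Phi$ that possesses a continuous left inverse forces $\Phi$ to be a homeomorphism onto its image, establishing the topological embedding.

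\textbf{The main obstacle} I anticipate is the continuity of multiplication by a fixed $h_j$ as a map into $C^r_{K_j}(M,E)$ in the $C^r$ (and especially $r=\infty$) setting on a manifold, rather than on a single chart. For $r=0$ this is elementary (the compact-open topology behaves well under multiplication by a bounded continuous function). For $r>0$ one must pass to charts, express the relevant seminorms via Lemma~\ref{opa}, and control all derivatives $\partial^\alpha(h_j\cdot\gamma)$ through the Leibniz rule; the estimate $\|\partial^\alpha(h_j\gamma)\|_{p,\infty}\le C\,\|\gamma\|_{|\alpha|,p}$ with $C$ depending only on the fixed function $h_j$ and the chart follows because $h_j$ and its derivatives are bounded on the compact $K_j$. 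A subtle but routine point is matching the initial topology on $C^r(M,E)$ defined through the iterated tangent maps $d^k$ with the chartwise seminorm description; since this is standard background from~\cite{BaG} and the statement explicitly cites those sources, I would invoke it rather than reprove it. Everything else—finiteness of the relevant index set on each $C^r_K$, the algebraic identity $\sum_j h_j=1$, and the direct-sum universal property—is bookkeeping.
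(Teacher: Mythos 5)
Your proof is correct and is essentially the standard argument: the paper itself does not prove this lemma but defers to \cite{BaG} (Lemma~1.3 there), and the proof given in that reference proceeds exactly as you do --- restrict to each $C^r_K(M,E)$ via the direct limit property, use local finiteness to see that $\Phi$ maps $C^r_K(M,E)$ into a finite subsum where continuity is checked coordinatewise via continuity of multiplication by a fixed $C^r$-function, and obtain the embedding from the continuous left inverse $(\gamma_j)_{j\in J}\mapsto\sum_{j\in J}\gamma_j$ furnished by the partition-of-unity identity. No gaps.
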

\begin{la}\label{Phiv}
For each $0\not=v\in E$, the map
$\Phi_v\colon C^r_c(M)\to C^r_c(M,E)$,
$\Phi_v(\gamma):=\gamma v$
is linear and a topological embedding
$($where $(\gamma v)(x):=\gamma(x)v)$.
\end{la}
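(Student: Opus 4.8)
The plan is to exhibit $\Phi_v$ as a special case of a \emph{pushforward} operation and then to settle the embedding property by producing a continuous left inverse. Write $m_v\colon \R\to E$, $t\mapsto tv$, which is continuous and linear; then $\Phi_v(\gamma)=m_v\circ\gamma$. More generally, for any continuous linear map $L\colon F_1\to F_2$ between locally convex spaces, composition yields a map $L_*\colon C^r_c(M,F_1)\to C^r_c(M,F_2)$, $f\mapsto L\circ f$, which is well defined since $L\circ f$ is again $C^r$ with $\Supp(L\circ f)\sub\Supp(f)$. I would first prove the general claim that \emph{$L_*$ is continuous and linear}, and then read everything off from it, since both $\Phi_v$ and the intended left inverse are pushforwards.

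For that claim, linearity is clear. For continuity, since $C^r_c(M,F_1)=\bigcup_K C^r_K(M,F_1)$ carries the locally convex direct limit topology and $L_*$ maps $C^r_K(M,F_1)$ into $C^r_K(M,F_2)$, it suffices to show that each restriction $C^r_K(M,F_1)\to C^r_K(M,F_2)$ is continuous. Using that $C^r(M,F_i)$ carries the initial topology with respect to the maps $d^k\colon C^r(M,F_i)\to C(T^kM,F_i)$ (compact--open topology) and the chain rule for continuous linear maps, one has $d^k(L\circ f)=L\circ d^k f$; hence the restriction of $L_*$ is continuous provided the pushforward $C(T^kM,F_1)\to C(T^kM,F_2)$, $g\mapsto L\circ g$, is continuous for the compact--open topologies. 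The latter is a one-line seminorm estimate: for a compact $Q\sub T^kM$ and a continuous seminorm $p$ on $F_2$, the seminorm $g\mapsto\sup_{\xi\in Q}p(L(g(\xi)))$ equals $g\mapsto\sup_{\xi\in Q}(p\circ L)(g(\xi))$, and $p\circ L$ is a continuous seminorm on $F_1$. (In a chart this is equally visible from Lemma~\ref{opa}, since $\|L\circ\gamma\|_{k,p}=\|\gamma\|_{k,\,p\circ L}$; the case $r=0$ is the same argument with $k=0$.)

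Granting the claim, $\Phi_v=(m_v)_*$ is continuous and linear, and it is injective because $v\not=0$ forces $\gamma v=0\Rightarrow\gamma=0$. Since $E$ is a Hausdorff locally convex space and $v\not=0$, the Hahn--Banach theorem provides a continuous linear functional $\lambda\colon E\to\R$ with $\lambda(v)=1$. Put $\Lambda:=\lambda_*\colon C^r_c(M,E)\to C^r_c(M)$, $\eta\mapsto\lambda\circ\eta$, which is continuous and linear by the claim. Because $\lambda\circ m_v=\mathrm{id}_\R$, functoriality of the pushforward gives $\Lambda\circ\Phi_v=(\lambda\circ m_v)_*=\mathrm{id}_{C^r_c(M)}$. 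A continuous injection that admits a continuous left inverse is automatically a topological embedding, its inverse on the image $\Phi_v(C^r_c(M))$ being the restriction of the continuous map $\Lambda$; this is exactly what is asserted.

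The only real work lies in the continuity of the pushforward $L_*$, that is, the compatibility with the maps $d^k$ on a manifold together with the compact--open estimate; everything else is formal. I expect the main advantage of the left-inverse argument to be that it bypasses the usual difficulty that a direct limit of topological embeddings $C^r_K(M)\to C^r_K(M,E)$ need not be a topological embedding, which one would otherwise have to confront by hand.
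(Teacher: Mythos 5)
Your argument is correct and complete. Note that the paper itself gives no proof of this lemma: it is quoted as Lemma~1.14 of \cite{BaG}, and the text merely points the reader there. So there is nothing in the present source to compare against line by line; judged on its own, your proof works. The two ingredients are exactly right: (i) functoriality and continuity of the pushforward $L_*$ on $C^r_c(M,\cdot)$, reduced via the universal property of the locally convex direct limit to the continuity of $C^r_K(M,F_1)\to C^r_K(M,F_2)$, which in turn follows from $d^k(L\circ f)=L\circ d^kf$ and the seminorm identity $\sup_{\xi\in Q}p(L(g(\xi)))=\sup_{\xi\in Q}(p\circ L)(g(\xi))$; and (ii) the Hahn--Banach functional $\lambda$ with $\lambda(v)=1$, giving the continuous left inverse $\Lambda=\lambda_*$ with $\Lambda\circ\Phi_v=(\lambda\circ m_v)_*=\mathrm{id}$. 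Your closing remark identifies the genuinely delicate point: continuity of the restrictions $C^r_K(M)\to C^r_K(M,E)$ would not by itself yield that $\Phi_v$ is an embedding of the direct limits, and the continuous left inverse is precisely what makes that step cost nothing. One could quibble that injectivity of $\Phi_v$ is already a consequence of $\Lambda\circ\Phi_v=\mathrm{id}$, so the separate injectivity remark is redundant, but that is cosmetic.
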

\begin{la}\label{scavect}
The map
$\Psi_{K,E}\colon C^r_K(M)\times E \to C^r_K(M,E)$, $(\gamma,v)\mto \gamma v$
is continuous,
for each compact subset $K\sub M$.
\end{la}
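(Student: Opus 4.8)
The plan is to verify the standard product-estimate criterion for continuity of a bilinear map between locally convex spaces: it suffices to show that for every continuous seminorm $P$ on $C^r_K(M,E)$ there are a continuous seminorm $q$ on $C^r_K(M)$ and a continuous seminorm $p$ on $E$ with $P(\gamma v)\le q(\gamma)\,p(v)$ for all $\gamma\in C^r_K(M)$ and $v\in E$. First I would record the shape of the generating seminorms. Since the topology on $C^r(M,E)$ is the initial topology with respect to the maps $d^k\colon C^r(M,E)\to C(T^kM,E)$ for $k\le r$, where the target carries the compact-open topology, the topology on $C^r_K(M,E)$ is generated by the seminorms $\gamma\mapsto \sup_{\xi\in L}p(d^k\gamma(\xi))$, for $k\in\N_0$ with $k\le r$, compact $L\sub T^kM$, and continuous seminorms $p$ on $E$. (For $r=0$ only $k=0$ occurs, so these are just the compact-open seminorms $\gamma\mapsto\sup_{\xi\in L}p(\gamma(\xi))$; cf.\ also Lemma~\ref{opa} for the chart picture.)

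The key computation is that passing to higher differentials commutes with pointwise multiplication by $v$. Writing $\ell_v\colon\R\to E$, $t\mapsto tv$ for the continuous linear multiplication-by-$v$ map, we have $\gamma v=\ell_v\circ\gamma$, and since continuous linear maps pass through the (iterated) tangent map in the sense of the calculus used here (\cite{Mil}, \cite{GaN}), an easy induction on $k$ gives $d^k(\gamma v)=(d^k\gamma)\,v$, i.e.\ $d^k(\gamma v)(\xi)=d^k\gamma(\xi)\cdot v$, where $d^k\gamma\colon T^kM\to\R$ is the scalar differential. Consequently
\[
\sup_{\xi\in L}p\big(d^k(\gamma v)(\xi)\big)=\Big(\sup_{\xi\in L}|d^k\gamma(\xi)|\Big)\,p(v)=q_{L,k}(\gamma)\,p(v),
\]
where $q_{L,k}(\gamma):=\sup_{\xi\in L}|d^k\gamma(\xi)|$ is a continuous seminorm on $C^r_K(M)$ (it is the corresponding generating seminorm for the scalar-valued space, obtained with $p=|\cdot|$ on $\R$). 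Thus \emph{each} generating seminorm of $C^r_K(M,E)$ factors exactly as a product of a seminorm on $C^r_K(M)$ and a seminorm on $E$.

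To finish, let $P$ be an arbitrary continuous seminorm on $C^r_K(M,E)$. By the description of the topology as an initial (projective) one, there are finitely many generating seminorms and a constant $C>0$ with $P\le C\max_{i}\big(\gamma\mapsto\sup_{\xi\in L_i}p_i(d^{k_i}\gamma(\xi))\big)$. Applying the factorization of the previous paragraph index by index and using $\max_i(a_ib_i)\le(\max_i a_i)(\max_i b_i)$ for nonnegative reals, I obtain
\[
P(\gamma v)\le C\max_i q_{L_i,k_i}(\gamma)\,p_i(v)\le C\,q(\gamma)\,p(v),
\]
with $q:=\max_i q_{L_i,k_i}$ a continuous seminorm on $C^r_K(M)$ and $p:=\max_i p_i$ a continuous seminorm on $E$. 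This is the desired product estimate, so $\Psi_{K,E}$ is continuous.

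The only genuinely non-formal point is the commutation relation $d^k(\gamma v)=(d^k\gamma)\,v$; everything else is bookkeeping with seminorms. I expect this to be the main thing to pin down, but it is a direct instance of the fact that continuous linear maps commute with differentials, applied to $\ell_v$, and for $r=0$ it is trivial since only $k=0$ is involved. I would also note that compactness of $K$ plays no essential role in the estimate itself beyond fixing the ambient space, and that the argument treats the case $r=0$ (with $M$ merely locally compact) and the case $r>0$ (with $M$ a manifold) uniformly, the two being merged by the $d^k$-description of the topology.
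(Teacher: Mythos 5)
Your argument is correct and is essentially the standard one: reduce to the generating seminorms of the initial topology, use $d^k(\gamma v)=(d^k\gamma)\,v$ (continuous linear maps commute with differentials), and read off the product estimate $P(\gamma v)\le Cq(\gamma)p(v)$, which suffices for continuity of a bilinear map. The paper itself gives no proof here, deferring to \cite{BaG} (Lemma~1.15), where the argument runs along the same lines, so there is nothing substantive to contrast.
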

\begin{defn}\label{deflrmrtr}
Let $G$ be a Lie group, with identity element $1$,
and $K\sub G$ be a compact subset.
Let $\cB$ be a basis of the tangent space $T_1(G)$,
and $E$ be a locally convex space.
For $v\in\cB$, let $\cL_v$ be the left-invariant vector field
on~$G$ given by $\cL_v(g):=T_1(L_g)(c)$,
and $\cR_v$ the right-invariant vector field $\cR_v(g):=T_1(R_g)(v)$
(where $L_g, R_g\colon G\to G$, $L_g(x):=gx$, $R_g(x):=xg$).
Let
\[
\cF_L:=\{\cL_v\colon v\in \cB \}\quad\mbox{and}\quad \cF_R:=\{\cR_v \colon v\in \cB \}\,.
\]
Given $r\in \N_0\cup\{\infty\}$, $k,\ell\in\N_0$ with $k+\ell \leq r$, and a continuous seminorm~$p$ on~$E$,
we define $\|\gamma\|^L_{k,p}$ (resp., $\|\gamma\|^R_{k,p}$) for $\gamma\in C^r_K(G,E)$
as the maximum of the numbers
\[
\|X_j\ldots X_1.\gamma\|_{p,\infty}\,,
\]
for $j\in \{0,\ldots, k\}$ and $X_1,\ldots, X_j\in \cF_L$
(resp., $X_1,\ldots, X_j\in \cF_R$).
Define $\|\gamma\|^{R,L}_{k,\ell,p}$
as the maximum of the numbers
\[
\|X_i\ldots X_1.Y_j\ldots Y_1.\gamma\|_{p,\infty}\,,
\]
for $i\in \{0,\ldots, k\}$, $j\in \{0,\ldots, \ell\}$ and
$X_1,\ldots, X_i\in \cF_R$,
$Y_1,\ldots, Y_j\in \cF_L$.
Then $\|.\|^L_{k,p}$, $\|.\|^R_{k,p}$ and $\|.\|^{R,L}_{k,\ell,p}$
are seminorms on $C_K^r(G,E)$.
If $E=\R$ and $p=|.|$,
we relax notation and write
$\|.\|^R_k$ instead of
$\|.\|^R_{k,p}$.
\end{defn}
In the situation of Definition~\ref{deflrmrtr}, we have
the following (see \cite[Lemma~1.8]{BaG}):
\begin{la}\label{enoughsn}
For each $t\in\N_0\cup\{\infty\}$, compact set $K\sub G$
and locally convex space~$E$, the topology on $C^t_K(G,E)$
coincides with the topologies defined by each of the following families
of seminorms:
\begin{itemize}
\item[\rm (a)]
The family of the seminorms $\|.\|^L_{j,p}$, for $j\in \N_0$ such that $j\leq t$
and continuous seminorms~$p$ on~$E$;
\item[\rm (b)]
The family of the seminorms $\|.\|^R_{j,p}$, for $j\in \N_0$ such that $j\leq t$
and continuous seminorms~$p$ on~$E$.
\end{itemize}
If $t<\infty$ and $t=k+\ell$, then the topology on $C^t_K(G,E)$
is also defined by the seminorms $\|.\|^{R,L}_{k,\ell,p}$, for continuous
seminorms $p$ on $E$.
\end{la}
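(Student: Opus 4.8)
The plan is to show that each of the three families defines the same locally convex topology as the one already fixed on $C^t_K(G,E)$, by proving mutual domination: two families $\mathcal P,\mathcal Q$ of seminorms induce the same topology as soon as every $p\in\mathcal P$ satisfies $p\le C\max\{q_1,\dots,q_m\}$ for finitely many $q_i\in\mathcal Q$ and some $C>0$, and symmetrically. Since every $\gamma\in C^t_K(G,E)$ is supported in the compact set $K$, each sup-seminorm $\|\cdot\|_{p,\infty}$ is a supremum over $K$; covering $K$ by finitely many relatively compact chart domains reduces every estimate to a local Euclidean comparison (the supremum over $K$ being the maximum of the suprema over the pieces), where by Lemma~\ref{opa} the original topology is the one given by $\|\gamma\|_{k,p}=\max_{|\alpha|\le k}\|\partial^\alpha\gamma\|_{p,\infty}$.

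One inequality is uniform across all three families. For a smooth vector field $X$ on $G$, the Lie derivative $\gamma\mapsto X.\gamma$ is a continuous linear map $C^t_K(G,E)\to C^{t-1}_K(G,E)$ lowering the order of differentiability by one, directly from the definition of the $C^r$-topology. As the $\cL_v$ and $\cR_v$ are smooth, iterating shows that each operator $\gamma\mapsto X_j\cdots X_1.\gamma$, whether $X_i\in\cF_L$, or $X_i\in\cF_R$, or a mixed string of total length $\le k+\ell=t$, is continuous from $C^t_K(G,E)$ into $C(G,E)$. Composing with the continuous seminorm $\|\cdot\|_{p,\infty}$ on $C_K(G,E)$ then shows that $\|\cdot\|^L_{j,p}$, $\|\cdot\|^R_{j,p}$ and $\|\cdot\|^{R,L}_{k,\ell,p}$ are all continuous for the original topology; this uses only that a mixed string has total order $\le t$.

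For the converse inequality in parts (a) and (b), I would exploit that, as $v$ runs through the basis $\cB$ of $T_1G$, the fields $\{\cL_v\}_{v\in\cB}$ form a smooth global frame of $TG$. In a chart, $\cL_v=\sum_a A^a_v\,\partial_a$ with $(A^a_v)$ smooth and pointwise invertible, so $\partial_a=\sum_v B^v_a\,\cL_v$ with $(B^v_a)$ smooth. By the product rule and induction on $|\alpha|$, every $\partial^\alpha\gamma$ with $|\alpha|\le j$ is a $C^\infty$-linear combination of terms $\cL_{v_1}\cdots\cL_{v_m}.\gamma$ with $m\le j$, whose coefficients (built from the $B^v_a$ and their derivatives) are bounded on the compact closures meeting $K$. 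Taking $p$-seminorms and suprema yields $\|\gamma\|_{j,p}\le C\|\gamma\|^L_{j,p}$ on each chart piece, hence globally over the finite cover. This proves (a); part (b) is identical with $\cF_R$.

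The delicate point, and the expected main obstacle, is the mixed family (here $t=k+\ell<\infty$). Two further structural facts enter: left- and right-invariant fields commute, $[\cR_w,\cL_u]=0$, and $\cR_w=\sum_u c^w_u\,\cL_u$ where the $c^w_u(g)$ are the entries of $\mathrm{Ad}(g^{-1})$, smooth and bounded on $K$. The inequality $\|\cdot\|^{R,L}_{k,\ell,p}\preceq\|\cdot\|_{t,p}$ is already covered above; the task is to dominate $\|\gamma\|_{t,p}$ by $\|\gamma\|^{R,L}_{k,\ell,p}$, i.e.\ to recover every derivative of order $\le t$ from the constrained operators $\cR_{w_1}\cdots\cR_{w_i}\cL_{u_1}\cdots\cL_{u_j}$ with $i\le k$ and $j\le\ell$. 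I would argue by downward induction on order using principal symbols: at each $g$, the top-order symbols of such operators of total degree $s\le t$ (choosing $i$ with $\max(0,s-\ell)\le i\le\min(s,k)$, which is possible exactly because $s\le k+\ell$) are the products $r_1\odot\cdots\odot r_i\odot l_1\odot\cdots\odot l_{s-i}$ of two bases of $T_gG$, and these span $\mathrm{Sym}^s T_gG$ since the multiplication $\mathrm{Sym}^iT_gG\otimes\mathrm{Sym}^{s-i}T_gG\to\mathrm{Sym}^sT_gG$ is surjective. Thus each $\partial^\alpha$ matches a combination of mixed operators up to strictly lower order, the remainder being absorbed by the induction hypothesis, with all coefficients smooth and $K$-bounded; this gives $\|\gamma\|_{t,p}\le C\|\gamma\|^{R,L}_{k,\ell,p}$ on $K$ and finishes the proof. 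The heart of the matter is precisely this symbol-spanning verification under the constraints $i\le k$, $j\le\ell$, whereas the pure left and right cases reduce to a single invertible change of frame.
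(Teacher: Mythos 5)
Your proposal is essentially correct, but note that the paper itself offers no proof of this lemma: it is quoted from \cite[Lemma~1.8]{BaG} and used as an imported fact, so there is no in-paper argument to compare against. Your reconstruction of parts (a) and (b) is the expected one and is sound: the easy direction via continuity of $\gamma\mto X.\gamma$ as a map $C^t_K\to C^{t-1}_K$ for smooth $X$, and the converse via the change of frame $\partial_a=\sum_v B^v_a\,\cL_v$ with smooth, locally bounded coefficients, localized to a finite chart cover of $K$. For the mixed seminorms your principal-symbol argument does work --- the products of $i$ right-invariant and $s-i$ left-invariant frame vectors span $\mathrm{Sym}^s T_gG$ because $\mathrm{Sym}^i\otimes\mathrm{Sym}^{s-i}\to\mathrm{Sym}^s$ is onto, and an admissible $i$ with $i\leq k$, $s-i\leq\ell$ exists precisely because $s\leq k+\ell$ --- but it is heavier than necessary. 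Since $\cL_u=\sum_w d^u_w\,\cR_w$ with $d(g)=\mathrm{Ad}(g)$ smooth, the frame-change induction of part (a) applied to $F:=\cL_{u_j}\cdots\cL_{u_1}.\gamma$ (for $j\leq\ell$) expresses any further string of $m\leq k$ left-invariant derivatives of $F$ as a combination, with coefficients bounded on~$K$, of terms $\cR_{w_1}\cdots\cR_{w_i}.F$ with $i\leq m$; these are exactly the terms entering $\|\cdot\|^{R,L}_{k,\ell,p}$, so one gets $\|\cdot\|^L_{k+\ell,p}\preceq\|\cdot\|^{R,L}_{k,\ell,p}$ directly and the mixed case reduces to part (a) with no symbols and no separate induction (this is essentially the content of Lemma~\ref{viaquot}). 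Two cosmetic points: your induction on the order $s$ is upward, not ``downward''; and you should record explicitly that a fixed finite family of admissible mixed operators can be chosen whose symbols span at every point of a neighbourhood of $K$, with coefficients chosen smoothly (or merely boundedly, which suffices for the estimate) --- you gesture at this and it is unproblematic.
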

To enable uniform notation in the proofs for Lie groups and locally compact groups,
we write $\|.\|^L_{0,p}:=\|.\|^R_{0,p}:=\|.\|^{R,L}_{0,0,p}:=\|.\|_{p,\infty}$
if $p$ is a continuous seminorm on~$E$ and $G$ a locally compact
group. We also write $\|.\|^R_0:=\|.\|_\infty$.
For example, Lemma~\ref{enoughsn}
then remains valid for locally compact groups~$G$.\\[3mm]
The following fact (covered by \cite[Lemma~2.6]{BaG}) will be used repeatedly:
\begin{la}\label{lasimpe}
Let $(G,r,s,t,b)$ be as in \emph{\ref{setting}}, $K\sub G$ be compact,
$\gamma\in C^r_K(G,E_1)$, $\eta\in C^s_c(G,E_2)$
and $q$, $p_1$, $p_2$ be continuous seminorms on $F$, $E_1$
and $E_2$,\linebreak
respectively, such that $q(b(x,y))\leq p_1(x)p_2(y)$
for all $(x,y)\in E_1\times E_2$. Let $k,\ell\in \N_0$
with $k\leq r$ and $\ell\leq s$. Then
\[
\|\gamma*_b\eta\|^{R,L}_{k,\ell,q} \, \leq \,  \|\gamma\|_{k,p_1}^R\|\eta\|_{\ell,p_2}^L\lambda_G(K).
\]
\end{la}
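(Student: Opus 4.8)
The plan is to reduce the statement to a single structural fact about convolution---that right-invariant derivatives act on the first factor and left-invariant derivatives on the second---and then to estimate directly. First I would prove, for $v\in\cB$, the commutation formulas
\[
\cL_v.(\gamma *_b \eta)=\gamma *_b (\cL_v.\eta)\qquad\text{and}\qquad \cR_v.(\gamma *_b \eta)=(\cR_v.\gamma) *_b \eta .
\]
For the left-invariant formula I use $(\cL_v.f)(x)=\frac{d}{dt}\big|_{t=0}f(x\exp(tv))$ and differentiate under the integral sign in~(\ref{defco}); since $x$ occurs only in $\eta(y^{-1}x)$ and $y^{-1}x\exp(tv)=(y^{-1}x)\exp(tv)$ is the right translate of $y^{-1}x$, the $t$-derivative yields precisely $(\cL_v.\eta)(y^{-1}x)$. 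For the right-invariant formula I use $(\cR_v.f)(x)=\frac{d}{dt}\big|_{t=0}f(\exp(tv)x)$ and first substitute $y=\exp(tv)z$; by left-invariance of $\lambda_G$ the measure is unchanged, while $\eta(y^{-1}\exp(tv)x)$ becomes $\eta(z^{-1}x)$, so differentiation in~$t$ now hits only $\gamma(\exp(tv)z)$ and produces $(\cR_v.\gamma)(z)$. That differentiation under the integral is permissible is part of the proof that $\gamma *_b \eta\in C^{r+s}_c(G,F)$ in \cite[Proposition~2.2]{BaG}, which I invoke. Iterating these formulas in the order in which the operators appear---the left-invariant $Y_1,\dots,Y_j\in\cF_L$ first, the right-invariant $X_1,\dots,X_i\in\cF_R$ afterwards---gives
\[
X_i\cdots X_1.Y_j\cdots Y_1.(\gamma *_b \eta)=(X_i\cdots X_1.\gamma) *_b (Y_j\cdots Y_1.\eta).
\]

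Granting this identity, the estimate is routine. Writing $\Gamma:=X_i\cdots X_1.\gamma$ and $H:=Y_j\cdots Y_1.\eta$, for each $x\in G$ I bound
\[
q\big((\Gamma *_b H)(x)\big)\le\int_G q\big(b(\Gamma(y),H(y^{-1}x))\big)\,d\lambda_G(y)\le\int_G p_1(\Gamma(y))\,p_2(H(y^{-1}x))\,d\lambda_G(y),
\]
using the standard fact that $q$ applied to an $F$-valued integral is bounded by the integral of $q$ of the integrand, together with the hypothesis $q(b(x,y))\le p_1(x)p_2(y)$. Since differential operators do not enlarge supports, $\Supp(\Gamma)\sub\Supp(\gamma)\sub K$, so the integrand vanishes off~$K$; estimating $p_1(\Gamma(y))\le\|\Gamma\|_{p_1,\infty}$ and $p_2(H(y^{-1}x))\le\|H\|_{p_2,\infty}$ then gives $q((\Gamma *_b H)(x))\le\|\Gamma\|_{p_1,\infty}\,\|H\|_{p_2,\infty}\,\lambda_G(K)$. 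Because $i\le k$ and $j\le\ell$, we have $\|\Gamma\|_{p_1,\infty}\le\|\gamma\|^R_{k,p_1}$ and $\|H\|_{p_2,\infty}\le\|\eta\|^L_{\ell,p_2}$. Taking the supremum over $x\in G$ and the maximum over all admissible $i,j$ and all choices of vector fields then yields the asserted bound on $\|\gamma *_b \eta\|^{R,L}_{k,\ell,q}$. The degenerate case $r=s=t=0$ with $G$ only locally compact is the subcase without derivatives, where $\|.\|^{R,L}_{0,0,q}=\|.\|_{q,\infty}$ and the displayed integral estimate applies verbatim.

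I expect the one genuine obstacle to be the right-invariant commutation formula, where the substitution $y=\exp(tv)z$ and the interchange of derivative and integral must be justified with care; the left-invariant formula is immediate, and everything after the two commutation identities is a direct application of the triangle inequality for vector-valued integrals, the product bound on~$b$, and the support restriction to~$K$.
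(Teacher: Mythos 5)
Your argument is correct. Note that the paper does not prove Lemma~\ref{lasimpe} itself but quotes it from \cite[Lemma~2.6]{BaG}; your derivation---the commutation identities $\cR_v.(\gamma*_b\eta)=(\cR_v.\gamma)*_b\eta$ and $\cL_v.(\gamma*_b\eta)=\gamma*_b(\cL_v.\eta)$ (the latter via left invariance of $\lambda_G$), iterated and followed by the elementary estimate of $q$ of the vector-valued integral over the support~$K$---is exactly the standard proof underlying that reference, with the support, differentiability-order and degenerate locally compact cases all handled correctly.
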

\section{Proof of Theorem~B}\label{secscavect}
First,
we briefly discuss
the compact covering number.
\begin{la}\label{lacov}
Let $M$ be a paracompact,
locally compact, non-compact\linebreak
topological space.
Then the following holds:
\begin{itemize}
\item[\rm(a)]
$M$ is $\sigma$-compact if and only if $\theta(M)=\aleph_0$.
\item[\rm(b)]
$\theta(M)=|J|$
for every
locally finite cover $(V_j)_{j\in J}$ of~$M$
by relatively\linebreak
compact, open,
non-empty sets.
\item[\rm(c)]
$M$ can be expressed as a topological sum $($disjoint union$)$
of open, $\sigma$-compact, non-empty subsets $U_j$, $j\in J$.
Then $\theta(M)=\max\{|J|,\aleph_0\}$.
\item[\rm(d)]
If $M$ is a manifold, then $\theta(M)$
is the maximum of $\aleph_0$
and the number of connected components
of~$M$.
\end{itemize}
\end{la}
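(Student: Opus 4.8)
The plan is to establish a chain of equivalences and equalities relating the compact covering number $\theta(M)$ to more tractable combinatorial data, proceeding part by part. For part~(a), I would argue that $\sigma$-compactness means exactly that $M$ is covered by countably many compact sets, so $\theta(M)\leq\aleph_0$; conversely, since $M$ is non-compact, no finite cover by compacts exists, forcing $\theta(M)\geq\aleph_0$, and together these give the stated equivalence. The key structural input I expect to rely on is that a paracompact, locally compact space admits a locally finite cover by relatively compact open sets, obtained by taking a locally finite open refinement of the cover by relatively compact open neighbourhoods that local compactness provides.

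For part~(b), let $(V_j)_{j\in J}$ be such a cover. The inequality $\theta(M)\leq|J|$ is immediate because the closures $\overline{V_j}$ are compact and cover $M$, giving a compact cover of cardinality $|J|$. For the reverse inequality $\theta(M)\geq|J|$, I would take an arbitrary cover $(K_\iota)_{\iota\in I}$ by compact sets and show $|J|\leq\max\{|I|,\aleph_0\}$, and then handle the finite/infinite cases to conclude $|J|\leq|I|$. The mechanism is local finiteness: each compact $K_\iota$ meets only finitely many $V_j$ (since a compact set meets only finitely many members of a locally finite family), so the index set $J$ is covered by the finite sets $\{j\colon V_j\cap K_\iota\neq\emptyset\}$ as $\iota$ ranges over $I$ — here I use that every $V_j$ is non-empty, hence meets some $K_\iota$. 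This exhibits $J$ as a union of $|I|$ finite sets, whence $|J|\leq|I|\cdot\aleph_0=\max\{|I|,\aleph_0\}$. Taking the infimum over all compact covers and invoking part~(a) to absorb the $\aleph_0$ factor then yields $|J|\leq\theta(M)$.

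For part~(c), the decomposition into $\sigma$-compact open pieces comes from the equivalence relation generated by the locally finite cover of part~(b): declare two sets equivalent if joined by a finite chain of pairwise-intersecting $V_j$'s, let the $U_j$ be the unions over equivalence classes. Each $U_j$ is open, non-empty, and $\sigma$-compact (being a countable union of relatively compact sets, since each class is countable by a chain-counting argument using local finiteness), and the $U_j$ are pairwise disjoint and clopen, giving the topological sum. The cardinality formula $\theta(M)=\max\{|J|,\aleph_0\}$ follows by combining part~(a) (each $U_j$ contributes $\aleph_0$ when non-compact) with the additivity of compact covering numbers over disjoint clopen summands. For part~(d), a connected manifold is $\sigma$-compact (second countable), so the connected components are exactly $\sigma$-compact open pieces as in~(c), and the formula specializes directly.

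The main obstacle I anticipate is the cardinal arithmetic in part~(b): correctly bounding $|J|$ by $\theta(M)$ requires showing the bound $|J|\leq\max\{|I|,\aleph_0\}$ collapses to $|J|\leq\theta(M)$, which relies on part~(a) to rule out the problematic case where $\theta(M)=\aleph_0$ but $|J|$ is uncountable — this cannot happen precisely because $M$ is then $\sigma$-compact and the local finiteness of an uncountable cover of a $\sigma$-compact space is impossible. Making this interaction between local finiteness, $\sigma$-compactness, and the cardinal estimate airtight, rather than the individual set-theoretic manipulations, is where care is needed.
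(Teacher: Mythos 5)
Your proposal is correct and follows essentially the same route as the paper: part (a) from non-compactness, part (b) via the observation that a compact set meets only finitely many members of a locally finite family (so $|J|\leq \theta(M)\cdot\aleph_0=\theta(M)$, using that every non-empty $V_j$ meets some member of a minimal compact cover), and part (d) by applying (c) to the connected components. The only (harmless) variations are in (c), where the paper simply cites the decomposition theorem and then reduces the cardinality formula to (b) by refining to a global locally finite relatively compact cover indexed by $\coprod_j I_j$, whereas you prove the decomposition via chain-equivalence classes and compute $\theta(M)$ by additivity over the clopen summands; likewise your worry about a ``problematic case'' in (b) is unnecessary, since $\theta(M)\geq\aleph_0$ already makes $\max\{\theta(M),\aleph_0\}=\theta(M)$.
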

\begin{proof}
(a) By definition, $M$ is $\sigma$-compact if and only if $\theta(M)\leq\aleph_0$;
and as $M$ is assumed non-compact,
this is equivalent to $\theta(M)=\aleph_0$.

(b) We have $\theta(M)\leq |J|$ by minimality,
as $(\wb{V_j})_{j\in J}$
is a compact cover.
For the converse, let
$(K_a)_{a\in A}$
be a cover of $M$ by compact sets, with $|A|=\theta(M)$.
Then $J_a:=\{j\in J\colon K_a\cap V_j\not=\emptyset\}$
is finite, for each $a\in A$.
Hence $|J|=|\bigcup_{a\in A}J_a|\leq |A|\aleph_0=|A|=\theta(M)$
and thus $|J|=\theta(M)$.

(c) The first assertion is well known~\cite{Eng}.
Each $U_j$ admits a
countable, locally finite cover $(V_{j,i})_{i\in I_j}$
by relatively compact, open, non-empty sets. Let $L:=\coprod_{j\in J}I_j$
be the disjoint union of the sets $I_j$.
Then $(V_{j,i})_{(j,i)\in L}$
is a locally finite, relatively compact open cover
of $M$.
Moreover, $J$ or one of the sets $I_j$ is infinite.
Hence $\theta(M)=|L|=\max\{|J|,\aleph_0\}$.

(d) Apply (c) to the partition of~$M$ into its connected
components.
\end{proof}
{\bf Proof of Theorem~B.}
If $\gamma\in C^r_c(M)$, let $K:=\Supp(\gamma)$.
Because $\Psi_{K,E}$ from Lemma~\ref{scavect}
is continuous, also
$\Psi_{c,E}(\gamma,.)=\Psi_{K,E}(\gamma,.)$ is continuous.
For each $v\in E$, the linear
map $\Psi_{c,E}(.,v)=\Phi_v$ is continuous, by Lemma~\ref{Phiv}.
Hence $\Psi_{c,E}$ is separately continuous.
As is clear, $\Psi_{c,E}$ is bilinear.
For each bounded set $B\sub C^r_c(M)$, there exists a compact set $K\sub M$
such that $B\sub C^r_K(M)$
(see, e.g., \cite[Lemma~1.16\,(c)]{BaG}).
Hence $\Psi_{c,E}|_{B\times E}=\Psi_{K,E}|_{B\times E}$
is continuous
and thus $\Psi_{c,E}$ is hypocontinuous in the first argument.
Since each $C^r_K(M)$ is a Fr\'{e}chet space and hence barrelled,
$C^r_c(M)=\dl\, C^r_K(M)$\vspace{-.3mm}
is a locally convex direct limit
of barrelled spaces and hence barrelled \cite[II.7.2]{SaW}.
The separately continuous bilinear map $\Psi_{c,E}$ on $C^r_c(M)\times E$
is therefore hypocontinuous in the second argument \cite[III.5.2]{SaW}.
Hence $\Psi_{c,E}$ is hypocontinuous.

We let $(U_j)_{j\in J}$ be a locally finite cover of $M$
by relatively compact, open sets~$U_j$. Then $|J|=\theta(M)$
(see Lemma~\ref{lacov}\,(b)).
Let $(h_j)_{j\in J}$ be a $C^r$-partition of unity subordinate to $(U_j)_{j\in J}$,
in the sense that $K_j:=\Supp(h_j)\sub U_j$.
Then also those $U_j$ with $h_j\not=0$ form a cover.
We may therefore assume that $h_j\not=0$ for all $j\in J$.

Now suppose that $\Psi_{c,E}$ is continuous.
Let $p_j$ be a continuous seminorm on~$E$, for each $j\in J$.
Let $U$
be the set of all $\gamma\in C^r_c(M,E)$
such that $\|h_j\gamma\|_{p_j,\infty}\leq 1$ for all $j\in J$.
Because
\[
\Phi\colon C^r_c(M,E)\to\bigoplus_{j\in J}C^r_{K_j}(M,E), \quad
\gamma\mto (h_j\gamma)_{j\in J}
\]
is continuous (see Lemma~\ref{lcsum}),
$U$ is a $0$-neighbourhood.
Hence, there are $0$-neighbourhoods $V\sub C^r_c(M)$ and $W\sub E$
such that $\Psi_{c,E}(V\times W)$\linebreak
$\sub U$.
After shrinking $W$, we may assume that $W=\wb{B}^q_1(0)$
for some continuous seminorm~$q$ on~$E$.
For each $j\in J$, we have $\ve_j h_j \in V$ for some $\ve_j>0$.
Hence $\Psi_{c,E}(\ve_j h_j,w)\in U$ for each $w\in W$
and thus $1\geq \|\ve_j h_j w\|_{p_j,\infty}=\ve_j p_j(w)\|h_j\|_\infty$.
So, abbreviating $C_j:=1/(\ve_j\|h_j\|_\infty)$,
we have $p_j(w)\leq C_j$ for all $w\in \wb{B}^q_1(0)$
and thus $p_j\leq C_j q$. Hence $p_j\preceq q$
for all $j$ and thus $E$ has the $\\theta(M)$-np.

Conversely, let $E$ have the $\theta(M)$-np.
If $r\geq 1$, we can cover each $\wb{U_j}$ with finitely many chart domains
$W_{j,i}$ and replace $U_j$ by $U_j\cap W_{j,i}$,
without increasing the cardinality of the family (since $|J|\aleph_0=|J|$).
We may\linebreak
therefore assume that each $U_j$ is the domain of a chart
$\phi_j\colon U_j\to V_j\sub \R^n$. 
Let $U\sub C^r_c(M,E)$ be a $0$-neighbourhood.
Because $\Phi$ just defined is a\linebreak
topological embedding,
after shrinking $U$ we may assume that there are\linebreak
continuous seminorms
$p_j$ on~$E$ and $k_j\in \N_0$ such that $k_j\leq r$ and
\[
U=\{\gamma\in C^r_c(M,E)\colon \sum_{j\in J}\|(h_j \gamma)\circ \phi_j^{-1}\|_{k_j,p_j}<1\}
\]
(see Lemma~\ref{opa}).
By the $\theta(M)$-np,
there exists a continuous seminorm $q$ on~$E$
and a family $(C_j)_{j\in J}$ of real numbers $C_j>0$ such
that $p_j\leq C_j q$ for each $j\in J$.
Then
\[
V:=\{\gamma\in C^r_c(M)\colon \sum_{j\in J}C_j\|(h_j \gamma)\circ \phi_j^{-1}\|_{k_j}<1\}
\]
is a $0$-neighbourhood in $C^r_c(M)$ and $\Theta_{c,E}(V\!\times \!\wb{B}^q_1(0))\!\sub \!U$
as $\|(h_j \gamma w)\circ\phi_j^{-1}\|_{k_j,p_j}$\linebreak
$=p_j(v)\|(h_j \gamma)\circ \phi_j^{-1}\|_{k_j}
\leq C_j q(v)\|(h_j \gamma)\circ \phi_j^{-1}\|_{k_j}\leq C_j \|(h_j \gamma)\circ \phi_j^{-1}\|_{k_j}$,
with sum~$<1$.
Hence $\Theta_{c,E}$ is continuous at~$(0,0)$ and hence continuous.

If~$E$ is normable, then $E$ has the $\theta(M)$-np (see
Proposition~\ref{excubc}\,(a)), whence $\Psi_{c,E}$ is continuous.
If~$E$ is metrizable and $\Psi_{c,E}$ is continuous,
then~$E$ has the $\theta(M)$-np,
and thus~$E$ has the cnp.
Hence~$E$ is normable (by Proposition~\ref{excubc}\,(a)).
\section{Proof of Theorem~C}\label{secconvol}
\begin{la}\label{preforint}
Let $(G,r,s,t,b)$ and $\beta_b$
be as in \emph{\ref{setting}}.
If $\beta_b$ admits product\linebreak
estimates, then also $b$ admits product estimates.
\end{la}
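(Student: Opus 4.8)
The plan is to reduce product estimates for the convolution map $\beta_b$ to product estimates for the underlying map $b$ by feeding $\beta_b$ the simplest possible inputs, namely ``rank-one'' test functions of the form $\phi\cdot x$ and $\psi\cdot y$, and then reading off the resulting estimate after composition with suitable continuous linear maps. The whole argument rests on a single elementary convolution identity.

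First I would fix nonzero functions $\phi\in C^r_c(G)$ and $\psi\in C^s_c(G)$, chosen to be nonnegative (smooth if $G$ is a Lie group, merely continuous in the locally compact case $r=s=t=0$), and set $\chi:=\phi*\psi$ (scalar convolution). The key computation is that, for $x\in E_1$ and $y\in E_2$,
\[
(\phi\cdot x)*_b(\psi\cdot y)=\chi\cdot b(x,y),
\]
because in the defining integral $\int_G b(\phi(y)x,\psi(y^{-1}z)y)\,d\lambda_G(y)$ the scalars $\phi(y)$ and $\psi(y^{-1}z)$ pull out of $b$ by bilinearity, leaving the scalar convolution $\chi$ times $b(x,y)$. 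I would verify that $\chi\not\equiv 0$: since $\phi,\psi\geq 0$ are nonzero, Fubini together with the left invariance of $\lambda_G$ gives $\int_G\chi\,d\lambda_G=(\int_G\phi\,d\lambda_G)(\int_G\psi\,d\lambda_G)>0$, so $\chi(z_0)\neq 0$ for some $z_0\in G$. Note also that $\chi\in C^{r+s}_c(G)\subseteq C^t_c(G)$, as $t\leq r+s$.

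Next I would package this into three continuous linear maps. Let $A_1\colon E_1\to C^r_c(G,E_1)$, $x\mapsto \phi\cdot x$, and $A_2\colon E_2\to C^s_c(G,E_2)$, $y\mapsto \psi\cdot y$; both are continuous by Lemma~\ref{scavect}, being the maps $\Psi_{K,\cdot}$ with the first argument fixed at $\phi$ resp.\ $\psi$ (and $K$ the corresponding support), followed by the continuous inclusions into the direct limits. Let $J\colon F\to C^t_c(G,F)$, $w\mapsto \chi\cdot w$, again continuous by Lemma~\ref{scavect}. Crucially, $J$ is a \emph{topological embedding}: the point evaluation $\mathrm{ev}_{z_0}\colon C^t_c(G,F)\to F$ is continuous and linear (the inclusion $C^t_c(G,F)\to C^t(G,F)$ is continuous and evaluation is continuous for the compact-open topology), and $\tfrac{1}{\chi(z_0)}\,\mathrm{ev}_{z_0}$ is a continuous left inverse of $J$. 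With these maps, the displayed identity reads precisely $\beta_b\circ(A_1\times A_2)=J\circ b$.

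Finally I would invoke Lemma~\ref{laobvi}. Since $\beta_b$ admits product estimates, part~(a) shows that $\beta_b\circ(A_1\times A_2)=J\circ b$ admits product estimates; and since $J$ is a topological embedding, part~(b) (applied to the continuous bilinear map $b$) yields that $b$ itself admits product estimates, as claimed. I expect the only points needing genuine care to be the verification that $\chi\not\equiv 0$ (which is exactly what makes $J$ injective with a continuous left inverse, hence a topological embedding) and the bookkeeping establishing continuity and the embedding property of $A_1,A_2,J$; the heart of the proof, the rank-one convolution identity, is a one-line consequence of bilinearity and the invariance of Haar measure.
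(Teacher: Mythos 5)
Your proof is correct and follows essentially the same route as the paper: both reduce to $b$ by composing $\beta_b$ with the rank-one maps $x\mapsto \phi\cdot x$, $y\mapsto\psi\cdot y$ and a continuous linear map out of $C^t_c(G,F)$, via the identity $(\phi\cdot x)*_b(\psi\cdot y)=(\phi*\psi)\cdot b(x,y)$. The only (cosmetic) difference is that the paper takes $\phi=\psi=h$ with $h(y)=h(y^{-1})$ and $\int_G h^2\,d\lambda_G=1$ and evaluates at the identity, so that the composite equals $b$ exactly and only Lemma~\ref{laobvi}\,(a) is needed, whereas you keep the factor $\chi=\phi*\psi$ and instead invoke part~(b) for the embedding $w\mapsto\chi\cdot w$.
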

\begin{proof}
Let $K\sub G$ be a compact identity neighbourhood.
If the map $\beta_b\colon C^r_c(G,E_1)\times C^s_c(G,E_2)\to C^t_c(G,F)$
admits product estimates,
then also the convolution map
$\theta\colon C^r_K(G,E_1)\times C^s_K(G,E_2)\to C^t_{KK}(G,F)$
admits\linebreak
product estimates,
being obtained via restriction and co-restriction from~$\beta_b$
(see Lemma~\ref{laobvi} (a) and~(b)).\\[2.4mm]
If $G$ is discrete, we simply take $K:=\{1\}$,
in which case~$b$ can be identified with~$\theta$
and hence admits product estimates -- as required.\\[2.4mm]
For general~$G$, choose a non-zero function
$h\in C^0_K(G)$ such that $h\geq 0$;
if $G$ is a Lie group, we assume that~$h$ is smooth.
After replacing $h$ with the function $y\mto h(y)+h(y^{-1})$
if necessary, we may assume that $h(y)=h(y^{-1})$ for all $y\in G$.
Also, after replacing $h$ with a positive multiple if necessary,
we may assume that $\int_Gh(y)^2\,d\lambda_G(y)=1$.
Then $\phi_1\colon E_1\to C^r_K(G,E_1)$, $u\mto h u$
and  $\phi_2\colon E_2\to C^r_K(G,E_2)$, $v\mto h v$
are continuous linear maps,
and also $\ve\colon C^t_{KK}(G,F)\to F$, $\gamma\mto \gamma(1)$
is continuous linear.
Hence
$\ve\circ \theta\circ (\phi_1\times \phi_2):$\linebreak
$E_1\times E_2\to F$
admits product estimates, by Lemma~\ref{laobvi}\,(a).
But this map takes $(u,v)\in E_1\times E_2$ to
\[
(hu * hv)(1)=b(u,v)\int_Gh(y)h(y^{-1})\,d\lambda_G(y)
=b(u,v)
\]
and thus coincides with~$b$. Hence $b$ admits product estimates.
\end{proof}
The next lemma, as well as Lemmas \ref{counteg} and \ref{viaquot}, are relevant only for the study of convolution
on Lie groups. Readers exclusively interested
in the case that~$G$ is a locally compact group and $r=s=t=0$
can skip them.
\begin{la}\label{examp2}
Let $(G,r,s,t,b)$ and $\beta_b$ be as in \emph{\ref{setting}}. 
If $\beta_b$ admits product\linebreak
estimates, $t=\infty$
and $G$ is not discrete,
then also $r=s=\infty$.
\end{la}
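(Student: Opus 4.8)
The plan is to prove the contrapositive: assuming $t=\infty$ and $G$ non-discrete, I show that if one of $r,s$ is finite then $\beta_b$ does \emph{not} admit product estimates. Since $t=\infty\le r+s$, at most one of $r,s$ is finite, and the two cases are parallel (right-invariant derivatives of a convolution act on the first factor, left-invariant ones on the second, as $\cR_v(\gamma*_b\eta)=(\cR_v\gamma)*_b\eta$ and $\cL_v(\gamma*_b\eta)=\gamma*_b(\cL_v\eta)$), so I treat $r<\infty$ in detail, whence $s=\infty$. First I would reduce to scalar convolution $*\colon C^r_c(G)\times C^s_c(G)\to C^t_c(G)$. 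Choosing $u_0\in E_1$, $v_0\in E_2$ with $b(u_0,v_0)\ne0$ and a continuous functional $\phi$ on $F$ with $\phi(b(u_0,v_0))=1$, the maps $\Phi_{u_0},\Phi_{v_0}$ of Lemma~\ref{Phiv} and post-composition with $\phi$ are continuous linear, and $(f,g)\mto\phi((fu_0)*_b(gv_0))=f*g$; so by Lemma~\ref{laobvi}\,(a) it suffices to refute product estimates for scalar convolution (which is $\beta_{b_0}$ for $b_0$ the multiplication of $\R$, with the same $(G,r,s,t)$).

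Since $G$ is a non-discrete Lie group (as $t>0$), fix $0\ne v\in T_1(G)$ and use the right-invariant seminorms $\|.\|^R_k$ of Lemma~\ref{enoughsn}. The decisive structural fact is that any continuous seminorm $p$ on $C^r_c(G)$, restricted to functions supported in a fixed compact identity neighbourhood $K$, satisfies $p\le C\|.\|^R_{k}$ for some $k\le r$: the order is \emph{capped} by the finite number $r$. Accordingly I would take the double sequence $p_{i,j}:=\|.\|^R_{r+i}$ (continuous on $C^\infty_c(G)$, independent of $j$, with target order growing in $i$). If product estimates held, there would be seminorms $p_i$ on $C^r_c(G)$ and $q_j$ on $C^\infty_c(G)$ with $\|f*g\|^R_{r+i}\le p_i(f)q_j(g)$ for all $i,j$; fixing $j=1$ and restricting to $K$ gives $\|f*g\|^R_{r+i}\le C_i c_1\,\|f\|^R_{k_i}\,\|g\|^R_{\ell_1}$ with $k_i\le r$ and $\ell_1<\infty$ \emph{fixed}.

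To exploit the cap I would fix one $f\in C^r_K(G)$ whose top derivative $\cR_v^{\,r}f$ is continuous but only Hölder (not Lipschitz) across a hypersurface transverse to the flow of $\cR_v$; in a flow-box chart straightening $\cR_v$ to $\partial_1$, take $f$ with $\partial_1^{\,r}f\sim|x_1|^{1/2}$ near the identity. For the second variable I would take an approximate identity $g_\ve$ (in the chart, $g_\ve\sim\ve^{-n}\varphi(\cdot/\ve)$, $n=\dim G$). Differentiating the convolution $r$ times onto $f$ and transferring the remaining $i$ derivatives onto $g_\ve$, the Euclidean model computation
\[
\cR_v^{\,r+i}(f*g_\ve)(1)\ \approx\ \ve^{-i}\!\int_{\R^n}\partial_1^{\,r}f(-\ve u)\,\varphi^{(i)}(u)\,du\ \sim\ c_i\,\ve^{1/2-i}
\]
shows $\|f*g_\ve\|^R_{r+i}\gtrsim\ve^{1/2-i}$ (the Hölder roughness of $\partial_1^{\,r}f$ prevents the moments against $\varphi^{(i)}$ from cancelling to higher order), whereas $\|g_\ve\|^R_{\ell_1}\lesssim\ve^{-n-\ell_1}$ and $p_i(f)$ is a constant. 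Choosing $i:=n+\ell_1+1$, the left side grows like $\ve^{-(n+\ell_1+1/2)}$ while the right side is $O(\ve^{-(n+\ell_1)})$, a contradiction as $\ve\to0$. The case $s<\infty$ (so $r=\infty$) is entirely parallel, using $\|.\|^L_{s+j}$, a fixed $\eta$ whose $\cL_v^{\,s}\eta$ is Hölder, and concentrating the \emph{first} variable.

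The main obstacle is passing from the Euclidean heuristic to an honest estimate on $G$: one must express $\cR_v^{\,r+i}(f*g_\ve)$ near the identity in coordinates, account for the fact that the group multiplication and Haar measure are only Euclidean to leading order, and control the Ad-twisting that arises when right-invariant derivatives of the convolution are transferred from the (only $C^r$) first factor to the smooth second factor. The point is that all of these corrections are of \emph{lower order} in $\ve$ than the leading term $c_i\,\ve^{1/2-i}$, because both $f$ and the transferred derivatives are effectively evaluated on the $\ve$-scale where the convolution is governed by its Euclidean model; making this precise, and verifying $c_i\ne0$ for the chosen index $i$ by a suitable choice of $\varphi$, is the technical heart of the argument.
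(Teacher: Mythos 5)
Your reduction to scalar convolution (via $\Phi_{u_0}$, $\Phi_{v_0}$, a functional $\phi$ with $\phi(b(u_0,v_0))=1$ and Lemma~\ref{laobvi}) and your ``structural fact'' --- that for $r<\infty$ every continuous seminorm on $C^r_c(G)$ is dominated on $C^r_K(G)$ by a multiple of the top seminorm $\|.\|^R_r$, so that product estimates with $j$ fixed would force $\|f*g\|^R_{r+i}\leq C_i\|f\|^R_{r}\|g\|^R_{\ell_1}$ for all $i$ with the right-hand orders capped --- reproduce the paper's argument step for step. The paper arrives at exactly this point (convolution would be continuous as a map $(C^r(G),\|.\|^L_r)\times(C^\infty(G),\|.\|^L_\ell)\to C^\infty(G)$ with the Fr\'{e}chet topology on the target) and then closes by citing \cite[Lemma~5.1]{BaG} for the impossibility. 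You instead undertake to prove that impossibility directly, and that is where the gap is: the entire analytic content of your proof is the displayed heuristic $\cR_v^{\,r+i}(f*g_\ve)(1)\approx \ve^{-i}\int \partial_1^{\,r}f(-\ve u)\,\varphi^{(i)}(u)\,du\sim c_i\,\ve^{1/2-i}$, which is asserted, not proved.

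The obstruction is one you only name. The identities $\cR_v(\gamma*\eta)=(\cR_v\gamma)*\eta$ and $\cL_v(\gamma*\eta)=\gamma*(\cL_v\eta)$ allow you to put at most $r$ of the $r+i$ right-invariant derivatives onto $f$; the excess $i$ derivatives do \emph{not} transfer onto $g_\ve$ by these identities, since only left-invariant derivatives land on the second factor. Moving them there requires the substitution $y\mapsto xz^{-1}$ in the convolution integral, which introduces the modular function and replaces $\cR_v$ by $\cR_{\mathrm{Ad}(y^{-1})v}$ with $y$ varying over $\Supp(f)$; one must then show that all resulting correction terms are $o(\ve^{1/2-i})$ uniformly and that the leading coefficient $c_i\neq 0$ for the specific index $i=n+\ell_1+1$ you need. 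None of this is carried out --- it is precisely the content of the external lemma the paper invokes --- so as written the proposal is a plausible plan whose decisive estimate is missing. (A minor structural point in your favour: the paper first disposes of non-compact $G$ via Theorem~A, whereas your construction is local at the identity and would treat compact and non-compact $G$ uniformly --- but only once the missing lower bound is actually established.)
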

\begin{proof}
Because $\beta:=\beta_b$ admits product estimates,
it is continuous.
Hence, if $G$ is not compact, then $r=s=\infty$ by Theorem~A.
It remains to show that $\beta$
does not admit product estimates if~$G$ is compact,
$r<\infty$, and $s=\infty$ (the case $r=\infty$, $s<\infty$
can be settled along similar lines).
As a tool, let $\theta\colon C^r(G)\times C^s(G)\to C^t(G)$
be the convolution of scalar-valued functions.
Pick $u\in E_1$, $v\in E_2$ such that $w:=b(u,v)\not=0$.
Let $\Phi_u\colon C^r(G)\to C^r(G,E_1)$,
$\Phi_v\colon C^s(G)\to C^s(G,E_2)$
and $\Phi_w\colon C^t(G)\to C^t(G,F)$
be the topological embeddings taking $\gamma$
to $\gamma u$, $\gamma v$ and $\gamma w$, respectively
(see Lemma~\ref{Phiv}).
In view of Lemma~\ref{laobvi}\,(b),
if we can show that~$\theta$ does not admit product
estimates,
then $\Phi_w\circ\theta=\beta\circ (\Phi_u\times \Phi_v)$
will not admit product estimates either
(Lemma~\ref{laobvi}\,(b)).
Hence also $\beta$ does not admit product estimates
(Lemma~\ref{laobvi}\,(a)).
We may therefore assume that $E_1=E_2=F=\R$
and $\beta=\theta$.
Consider the continuous\linebreak
seminorms
$P_{i,j}:=\|.\|^L_i$ on $C^\infty(G)$ for $i,j\in \N$.
If~$\beta$ would admit product estimates (which will lead
to a contradiction), we could find continuous seminorms
$P_i$ on $C^r(G)$ and $Q_j$ on $C^\infty(G)$ such that
$P_{i,j}(\gamma*\eta)\leq P_i(\gamma)Q_j(\eta)$.
After increasing $P_i$ and $Q_j$
if necessary, we may assume that $P_i=a_i\|.\|^L_r$
and $Q_j=c_j\|.\|^L_{s_j}$ with suitable $a_i,c_j>0$
and $s_j\in \N_0$ (see Lemma~\ref{enoughsn}).
Thus
\[
\|\gamma*\eta\|^L_i\leq a_ic_j\|\gamma\|^L_r\|\eta\|^L_{s_j}
\]
for all $i,j\in \N$ and $(\gamma,\eta)\in C^r(G)\times C^\infty(G)$.
In particular, with $j:=1$ and $\ell:=s_1\in \N_0$, we obtain
\[
\|\gamma*\eta\|^L_i\leq a_ic_1\|\gamma\|^L_r\|\eta\|^L_\ell
\]
for all $i\in \N$ and $(\gamma,\eta)\in C^r(G)\times C^\infty(G)$.
Hence $\beta_b$ would be continuous
as a map $(C^r(G),\|.\|^L_r)\times (C^\infty(G),\|.\|_\ell^L)\to C^\infty(G)$,
using the usual Fr\'{e}chet topology on the right hand side,
but only the indicated norms on the left. This is impossible,
as recorded in \cite[Lemma~5.1]{BaG}.
\end{proof}
\begin{la}\label{examp2b}
Let $(G,r,s,t,b)$ and $\beta_b$ be as in \emph{\ref{setting}}.
Assume that $G$ is $\sigma$-compact
and $b$ admits product estimates.
Moreover, assume that $t<\infty$ or $r=s=t=\infty$.
Then also $\beta_b$ admits
product estimates.
\end{la}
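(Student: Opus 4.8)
The plan is to localise the convolution by a partition of unity, to reduce the given seminorms on $C^t_c(G,F)$ to the invariant seminorms of Definition~\ref{deflrmrtr} on compact pieces, to separate the $E_1$- and $E_2$-dependence by a \emph{single} use of the product estimates available for $b$, and then to reassemble two sequences of continuous seminorms using the local finiteness that $\sigma$-compactness provides.

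First I would fix a countable locally finite cover $(U_m)_{m\in\N}$ of $G$ by relatively compact open sets (possible since $G$ is $\sigma$-compact and locally compact) and a subordinate partition of unity $(\chi_m)_{m\in\N}$ with $L_m:=\Supp(\chi_m)\sub U_m$ compact, taking the $\chi_m$ smooth when $r>0$. For $\gamma\in C^r_c(G,E_1)$ and $\eta\in C^s_c(G,E_2)$ this yields the finite sum $\gamma*_b\eta=\sum_{m,m'}(\chi_m\gamma)*_b(\chi_{m'}\eta)$, where each summand lies in $C^t_{L_mL_{m'}}(G,F)$ because $\Supp((\chi_m\gamma)*_b(\chi_{m'}\eta))\sub L_mL_{m'}$. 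Hence, for target seminorms $p_{i,j}$ on $C^t_c(G,F)$, the triangle inequality reduces everything to estimating the restriction of $p_{i,j}$ to the Fr\'echet space $C^t_{L_mL_{m'}}(G,F)$.

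Assume first that $t<\infty$, and fix once and for all a splitting $t=k+\ell$ with $k\le r$ and $\ell\le s$ (possible as $t\le r+s$). On each piece, continuity of $p_{i,j}$ together with Lemma~\ref{enoughsn} yields a continuous seminorm $\rho_{i,j,m,m'}$ on $F$ with $p_{i,j}\le\|\cdot\|^{R,L}_{k,\ell,\rho_{i,j,m,m'}}$ there (the constant being absorbed into $\rho$). Writing $a:=(i,m)$ and $c:=(j,m')$ under fixed bijections $\N^2\isom\N$, the family $(\rho_{i,j,m,m'})$ becomes a genuine double sequence $(\rho_{a,c})$ of seminorms on $F$, and \emph{one} application of the product estimates of $b$ produces seminorms $\sigma_{i,m}$ on $E_1$ and $\tau_{j,m'}$ on $E_2$ with $\rho_{a,c}(b(x,y))\le\sigma_{i,m}(x)\tau_{j,m'}(y)$. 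This is the pivotal step: it is precisely the product estimate for $b$ that makes the $F$-bound factor, with the $E_1$-part depending only on $(i,m)$ and the $E_2$-part only on $(j,m')$. Lemma~\ref{lasimpe} then gives $\|(\chi_m\gamma)*_b(\chi_{m'}\eta)\|^{R,L}_{k,\ell,\rho_{a,c}}\le\lambda_G(L_m)\,\|\chi_m\gamma\|^R_{k,\sigma_{i,m}}\,\|\chi_{m'}\eta\|^L_{\ell,\tau_{j,m'}}$, and summing over $m,m'$ factors the bound as $p_i(\gamma)\,q_j(\eta)$ with $p_i(\gamma):=\sum_m\lambda_G(L_m)\|\chi_m\gamma\|^R_{k,\sigma_{i,m}}$ and $q_j(\eta):=\sum_{m'}\|\chi_{m'}\eta\|^L_{\ell,\tau_{j,m'}}$. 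Each $p_i$ and $q_j$ is continuous: on any $C^r_K(G,E_1)$ only finitely many summands survive by local finiteness of $(\chi_m)$, so the restriction is a finite sum of continuous seminorms, and continuity on the direct limit follows.

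The main obstacle is the case $t=\infty$ (forcing $r=s=\infty$), where there is no fixed finite splitting: the order $N_{i,j,m,m'}$ needed to dominate $p_{i,j}$ by $\|\cdot\|^L_{N,\rho}$ on $C^\infty_{L_mL_{m'}}(G,F)$ depends on all four indices and is unbounded. I would handle this with two devices. First, an order-splitting estimate on each compact piece, namely $\|\cdot\|^L_{N,\rho}\le C_{L_mL_{m'}}\|\cdot\|^{R,L}_{k,\ell,\rho}$ whenever $k+\ell\ge N$; it holds because each left-invariant field is a combination of right-invariant fields with smooth coefficients and the two families commute, so that a left-invariant operator of order $N$ is, on a compact set, controlled by the mixed operators of bidegree $(k,\ell)$ with the \emph{same} seminorm $\rho$ on $F$. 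Second, the diagonal trick of Proposition~\ref{cubcuseful}: regarding $N_{a,c}$ as a double sequence in $a=(i,m)$ and $c=(j,m')$, I set $k_a:=\max_{c\le a}N_{a,c}$ and $\ell_c:=\max_{a\le c}N_{a,c}$, so that $k_{i,m}+\ell_{j,m'}\ge N_{i,j,m,m'}$ for every quadruple while $k$ depends only on $(i,m)$ and $\ell$ only on $(j,m')$. With these $m$- and $m'$-dependent orders the estimate of the previous paragraph goes through unchanged, and continuity of the resulting $p_i$, $q_j$ again follows from local finiteness. I expect the order-splitting estimate to be the most delicate ingredient, as it is exactly what breaks down when $t=\infty$ but $r$ or $s$ is finite, in line with the necessity of condition~(b) in Theorem~A.
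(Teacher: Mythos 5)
Your proof is correct and follows essentially the same route as the paper's: localisation by a partition of unity, reduction to the mixed seminorms $\|\cdot\|^{R,L}_{k,\ell,\rho}$ on compact pieces (Lemma~\ref{enoughsn}, with your ``order-splitting estimate'' being exactly Lemma~\ref{viaquot}), a single application of the product estimates for $b$ after bundling the index pairs, Lemma~\ref{lasimpe}, and the diagonal trick of Lemma~\ref{counteg} for the case $t=\infty$. The only difference is organisational: the paper packages the final reassembly into an abstract statement about bilinear maps on countable direct sums (Lemma~\ref{specifsum}), whereas you sum the localised estimates directly and check continuity of the resulting seminorms via local finiteness --- the two are equivalent.
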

The proof will be based on three lemmas:
\begin{la}\label{specifsum}
Let $(E_i)_{i\in \N}$ and $(F_j)_{j\in \N}$
be sequences of locally convex spaces, $H$ be a locally
convex space and $\beta_{i,j}\colon E_i\times F_j\to H$
be continuous bilinear maps for $i,j\in\N$.
Assume that, for every double sequence $(P_{\sigma,\tau})_{\sigma,\tau\in \N}$
of continuous seminorms on~$H$,
there are continuous seminorms~$P_{i,\sigma}$ on $E_i$,
continuous seminorms~$Q_{j,\tau}$ on $F_j$
and numbers $C_{i,j,\sigma,\tau}>0$, such that
\[
P_{\sigma,\tau}(\beta_{i,j}(x,y))\leq C_{i,j,\sigma,\tau}P_{i,\sigma}(x)Q_{j,\tau}(y)
\]
for all $i,j,\sigma,\tau\in \N$ and all $x\in E_i$ and $y\in F_j$.
Then the bilinear map
$\beta\colon \!\!\big(\bigoplus_{i\in\N}E_i\big)\times
\big(\bigoplus_{j\in\N}F_j\big)\!\to H$
taking $((x_i)_{i\in \N},(y_j)_{j\in\N})$ to $\sum_{i,j\in \N}\beta_{i,j}(x_i,y_j)$
admits product estimates.
\end{la}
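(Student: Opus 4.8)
The plan is to feed the given data straight into the hypothesis and then decouple the resulting constants by the elementary lemma of \cite{Bis}. So let $(p_{m,n})_{m,n\in\N}$ be an arbitrary double sequence of continuous seminorms on $H$; I must produce continuous seminorms $p_m$ on $\bigoplus_{i\in\N}E_i$ and $q_n$ on $\bigoplus_{j\in\N}F_j$ realizing a product estimate for $\beta$. First I would apply the hypothesis to the double sequence $(P_{\sigma,\tau}):=(p_{\sigma,\tau})$, obtaining continuous seminorms $P_{i,\sigma}$ on $E_i$, continuous seminorms $Q_{j,\tau}$ on $F_j$ and constants $C_{i,j,\sigma,\tau}>0$ with $p_{\sigma,\tau}(\beta_{i,j}(x,y))\le C_{i,j,\sigma,\tau}P_{i,\sigma}(x)Q_{j,\tau}(y)$. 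Recalling from Remark~\ref{semisum} that the maps $x=(x_i)_i\mapsto\sum_i r_i(x_i)$ (with $r_i$ continuous seminorms on $E_i$) are exactly a defining family of continuous seminorms on the countable direct sum, I look for $p_m$ and $q_n$ of the shape $p_m(x)=\sum_i\alpha_{i,m}P_{i,m}(x_i)$ and $q_n(y)=\sum_j\beta_{j,n}Q_{j,n}(y_j)$ for suitable scalars $\alpha_{i,m},\beta_{j,n}>0$; all sums are finite because elements of the direct sum have finite support.

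With this ansatz the verification is immediate modulo one inequality. For $x=(x_i)_i$ and $y=(y_j)_j$ the triangle inequality and the estimate from the hypothesis give $p_{m,n}(\beta(x,y))\le\sum_{i,j}p_{m,n}(\beta_{i,j}(x_i,y_j))\le\sum_{i,j}C_{i,j,m,n}P_{i,m}(x_i)Q_{j,n}(y_j)$ (here I specialize $\sigma=m$, $\tau=n$), while $p_m(x)q_n(y)=\sum_{i,j}\alpha_{i,m}\beta_{j,n}P_{i,m}(x_i)Q_{j,n}(y_j)$. Hence it suffices to arrange the termwise bound $C_{i,j,m,n}\le\alpha_{i,m}\beta_{j,n}$ for all $i,j,m,n\in\N$; this single inequality is the crux of the proof.

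The obstacle is that the constant $C_{i,j,m,n}$ couples the summation indices $i$ and $j$ (and links them to $m,n$), so one cannot split it naively, and an attempt to absorb the whole coupling into the $E_i$-factor alone would destroy finiteness of the sums. The resolution is that \cite{Bis} decouples an \emph{arbitrary} countable two-index array of positive reals, not merely those already of product form. Thus I would fix a bijection $\N^2\cong\N$, regard $(i,m)$ and $(j,n)$ as single indices ranging over $\N^2$, and view $C_{i,j,m,n}$ as a two-index array $\widetilde C_{(i,m),(j,n)}$. The lemma of \cite{Bis} then yields a single family $(c_a)_{a\in\N^2}$ of positive reals with $\widetilde C_{a,b}\le 1/(c_ac_b)$, that is $C_{i,j,m,n}\le 1/(c_{(i,m)}c_{(j,n)})$. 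Setting $\alpha_{i,m}:=1/c_{(i,m)}$ and $\beta_{j,n}:=1/c_{(j,n)}$ gives exactly $C_{i,j,m,n}\le\alpha_{i,m}\beta_{j,n}$, with $\alpha_{i,m}$ depending only on $(i,m)$ and $\beta_{j,n}$ only on $(j,n)$, as needed. Feeding these scalars back into the ansatz yields continuous seminorms $p_m,q_n$ (by Remark~\ref{semisum}) that satisfy the product estimate, so $\beta$ admits product estimates; the only points to double-check are that the bundling into super-indices is harmless and that the lemma of \cite{Bis} genuinely applies to a non-product-structured array.
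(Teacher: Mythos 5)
Your proposal is correct, but the key step is discharged differently from the paper. Both arguments share the same skeleton: expand $\beta(x,y)$ into the finite sum $\sum_{i,j}\beta_{i,j}(x_i,y_j)$, apply the hypothesis termwise, and then reduce everything to finding weights $\alpha_{i,\sigma},\beta_{j,\tau}>0$ with $C_{i,j,\sigma,\tau}\leq\alpha_{i,\sigma}\beta_{j,\tau}$, which are then built into seminorms on the direct sums via Remark~\ref{semisum} (you use the sum-type seminorms, the paper the max-type ones; both are admissible). The difference lies in how the decoupling is obtained. The paper routes it through the universal bilinear map $b\colon\R^{(\N)}\times\R^{(\N)}\to\R^{(\N\times\N)}$, $b(u,v)=(u_iv_j)_{i,j}$: it feeds the seminorms $w\mapsto\sum_{i,j}C_{i,j,\sigma,\tau}|w_{i,j}|$ into the product estimates for $b$ (Corollary~\ref{Rinfty}, resting on the cnp of $\R^{(\N)}$ and Proposition~\ref{cubcuseful}) and reads off the weights $r_{i,\sigma},s_{j,\tau}$ from the resulting max-form seminorms. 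You instead apply the lemma of \cite{Bis} directly to the array $C_{i,j,\sigma,\tau}$ re-indexed over $\N^2\times\N^2\cong\N\times\N$; this is legitimate, since that lemma decouples an arbitrary positive double array and the same device is already used in the paper's proof of Proposition~\ref{newprop}. Your route is shorter and more self-contained; the paper's route reuses established machinery and highlights the universal role of the multiplication on $\R^{(\N)}$. The only blemishes are cosmetic: the symbol $\beta_{j,n}$ collides with the notation $\beta_{i,j}$ for the given bilinear maps, and you should note explicitly that each $\alpha_{i,m}P_{i,m}$ is again a continuous seminorm on $E_i$, so that Remark~\ref{semisum} applies.
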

\begin{proof}
The map $b\colon \R^{(\N)}\times \R^{(\N)}\to \R^{(\N\times\N)}$,
$b((u_i)_{i\in \N}, (v_j)_{j\in \N}):=(u_iv_j)_{(i,j)\in \N\times\N}$
admits product estimates, by Corollary~\ref{Rinfty}.
For all $\sigma,\tau\in \N$,
\[p_{\sigma,\tau}(w):=\sum_{i,j\in \N}
C_{i,j,\sigma,\tau}|w_{i,j}|
\]
defines a continuous seminorm on $\R^{(\N\times\N)}$
(see Remark~\ref{semisum}).
Hence, there exist continuous seminorms
$p_\sigma$ and $q_\tau$ on $\R^{(\N)}$ such that $p_{\sigma,\tau}(b(u,v))\leq p_\sigma(u)q_\tau(v)$
for all $u,v\in \R^{(\N)}$.
By Remark~\ref{semisum},
after increasing $p_\sigma$ and $q_\tau$
if necessary, we may assume that they are of the form
\[
p_\sigma(u)=\max\{r_{i,\sigma}|u_i|\colon i\in \N\}
\]
and $q_\tau(v)=\max\{s_{j,\tau} |v_j|\colon j\in \N\}$
with suitable $r_{i,\sigma},s_{j,\tau}>0$.
Then
\[
P_\sigma(x) :=p_\sigma((P_{i,\sigma}(x_i))_{i\in \N})=\max\{r_{i,\sigma}P_{i,\sigma}(x_i)\colon i\in\N\}
\]
and $Q_\tau(y):=q_\tau((Q_{j,\tau}(y_j))_{j\in\N})$
(for $x\in E:=\bigoplus_{i\in \N}E_i$, $y\in F:=\bigoplus_{j\in \N}F_j$)
define continuous seminorms $P_\sigma$ and $Q_\tau$
on~$E$ and~$F$, respectively (see\linebreak
Remark~\ref{semisum}).
For all $\sigma,\tau\in \N$ and $x,y$ as before, we obtain
\begin{eqnarray*}
P_{\sigma,\tau}(\beta(x,y)) & \leq & \sum_{i,j\in \N}P_{\sigma,\tau}(\beta_{i,j}(x_i,y_j))
\,\leq\,  \sum_{i,j\in \N}C_{i,j,\sigma,\tau}P_{i,\sigma}(x_{i,\tau})Q_{j,\tau}(y_j)\\
&=& p_{\sigma,\tau}(b((P_{i,\sigma}(x_i))_{i\in \N},(Q_{j,\tau}(y_j))_{j\in \N}))\\
&\leq& p_\sigma((P_{i,\sigma}(x_i))_{i\in\N})q_\tau((Q_{j,\tau}(y_j))_{j\in \N})
\,=\,P_\sigma(x)Q_\tau(y)\,.
\end{eqnarray*}
Hence $\beta$ admits product estimates.
\end{proof}
\begin{la}\label{counteg}
Let $A$ be a countable set and $t_{\alpha,\beta}\in \N_0$
for $\alpha,\beta \in A$.
Then there exist $r_\alpha,s_\beta\in \N_0$
for $\alpha,\beta\in A$ such that
\[
(\forall \alpha,\beta\in A) \;\; r_\alpha+s_\beta\, \geq\,  t_{\alpha,\beta}\,.
\]
\end{la}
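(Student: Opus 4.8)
The plan is to exploit the countability of $A$ directly. Since $A$ is at most countable, I would enumerate it as $A=\{a_1,a_2,\ldots\}$ (a finite list if $A$ happens to be finite) and then build the two families out of the partial maxima of the $t_{\alpha,\beta}$ taken along this enumeration. The essential observation is that, although the family $(t_{\alpha,\beta})_{\alpha,\beta\in A}$ may well be unbounded, every single pair $(\alpha,\beta)$ involves only two indices; so $t_{\alpha,\beta}$ can be controlled by looking only at the larger of the two positions at which $\alpha$ and $\beta$ occur in the enumeration.

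Concretely, for $n\in\N$ I would set
\[
M_n:=\max\{t_{a_i,a_j}\colon 1\leq i,j\leq n\}\in\N_0,
\]
which is a finite maximum and therefore yields a nondecreasing sequence $M_1\leq M_2\leq\cdots$ in $\N_0$. I would then simply define $r_{a_n}:=M_n$ and $s_{a_n}:=M_n$ for every $n$.

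To verify the required inequality, I would fix $\alpha=a_i$ and $\beta=a_j$ and put $m:=\max(i,j)$. By definition of $M_m$ one has $t_{a_i,a_j}\leq M_m$, while monotonicity of the sequence gives $M_m=\max(M_i,M_j)$. Since all entries lie in $\N_0$, we get $\max(M_i,M_j)\leq M_i+M_j=r_\alpha+s_\beta$, and hence $r_\alpha+s_\beta\geq t_{\alpha,\beta}$, as desired. (If one prefers to avoid the sum, the single summand already suffices: when $i\geq j$ one has $r_{a_i}=M_i=M_m\geq t_{a_i,a_j}$, and when $j\geq i$ one has $s_{a_j}=M_j=M_m\geq t_{a_i,a_j}$.)

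There is essentially no hard step here. The only point worth keeping in mind is that a single uniform constant cannot work once the $t_{\alpha,\beta}$ are unbounded, and this is precisely what forces the use of countability: it lets one replace one global bound by the monotone sequence $(M_n)$ of partial maxima along a fixed enumeration. The finite case requires no separate argument, since then $(M_n)$ is merely a finite nondecreasing list and the same definitions apply verbatim.
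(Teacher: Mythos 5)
Your proof is correct and follows essentially the same idea as the paper's: enumerate $A$, take partial maxima along the enumeration, and control $t_{\alpha,\beta}$ via the larger of the two indices. The only (immaterial) difference is that you use the maximum over the full square $\{1,\dots,n\}^2$ for both $r_{a_n}$ and $s_{a_n}$, whereas the paper uses the slightly smaller row- and column-maxima $\max\{t_{i,j}\colon j\le i\}$ and $\max\{t_{i,j}\colon i\le j\}$ separately.
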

\begin{proof}
If $A$ is a finite set, the assertion is trivial.
If $A$ is infinite, we may assume that $A=\N$.
For $i\in \N$, let $r_i:=\max\{t_{i,j}\colon j\leq i\}$.
For $j\in \N$, let $s_j:=\max\{t_{i,j}\colon i\leq j\}$.
If $i,j\in \N$ and $i<j$, we deduce $t_{i,j}\leq s_j\leq r_i+s_j$.
Likewise, $t_{i,j}\leq r_i\leq r_i+s_j$ if $i\geq j$.
\end{proof}
\begin{la}\label{viaquot}
Let $G$ be a Lie group, $E$ be a locally convex space,
$K\sub G$ be compact,
$p$ be a continuous seminorm on~$E$
and $k,\ell\in \N_0$.
Then there exists $C>0$ such that
$\|\gamma\|^L_{k+\ell,p}\leq C\|\gamma\|^{R,L}_{k,\ell,p}
=\|\gamma\|^{R,L}_{k,\ell,C\cdot p}$ for all $\gamma\in C^{k+\ell}_K(G,E)$.
\end{la}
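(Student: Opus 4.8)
The plan is to reduce the estimate to the level of the individual generating terms of the two seminorms, and to convert ``left'' derivatives into ``right-then-left'' derivatives using two standard structural facts about the Lie group~$G$. First, left- and right-invariant vector fields commute, $\cL_v.(\cR_w.\gamma)=\cR_w.(\cL_v.\gamma)$ for all $v,w\in\cB$ (a standard fact, their flows being mutually commuting right and left translations). Second, each left-invariant field is a pointwise combination of right-invariant ones: writing $\mathrm{Ad}(g)v=\sum_{w\in\cB}a_{v,w}(g)\,w$ with smooth coefficient functions $a_{v,w}\colon G\to\R$ (the matrix entries of the adjoint representation in the basis~$\cB$), the identity $\cL_v(g)=\cR_{\mathrm{Ad}(g)v}(g)$ gives $(\cL_v.\gamma)(g)=\sum_{w\in\cB}a_{v,w}(g)(\cR_w.\gamma)(g)$ for every $g\in G$.

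Now I would fix a generating term $\|\cL_{v_m}\cdots\cL_{v_1}.\gamma\|_{p,\infty}$ of $\|\gamma\|^L_{k+\ell,p}$, where $m\le k+\ell$ and $v_1,\dots,v_m\in\cB$. If $m\le\ell$, this term already occurs in $\|\gamma\|^{R,L}_{k,\ell,p}$ (take $i=0$, $j=m$), so it is bounded by it. If $m>\ell$, I would convert the outermost $k':=m-\ell\le k$ fields one at a time. Rewriting $\cL_{v_m}(\cL_{v_{m-1}}\cdots\cL_{v_1}.\gamma)$ by the second fact yields $\sum_w a_{v_m,w}\,\cR_w.(\cL_{v_{m-1}}\cdots\cL_{v_1}.\gamma)$, and the first fact lets me commute $\cR_w$ inward past all the $\cL_{v_i}$ until it acts directly on~$\gamma$. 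Iterating on $\cL_{v_{m-1}},\dots,\cL_{v_{\ell+1}}$ --- each time applying the pointwise identity to the already-computed innermost block and commuting the new right-invariant field inward --- gives
\[
(\cL_{v_m}\cdots\cL_{v_1}.\gamma)(g)=\sum_{w_1,\dots,w_{k'}\in\cB}c_{w_1,\dots,w_{k'}}(g)\,\big(\cL_{v_\ell}\cdots\cL_{v_1}.\cR_{w_{k'}}\cdots\cR_{w_1}.\gamma\big)(g),
\]
where each $c_{w_1,\dots,w_{k'}}$ is a product of $k'$ of the functions $a_{v,w}$, hence smooth. Commuting the $\cR_{w_s}$ to the outside once more, each summand is a smooth coefficient times a term $\cR_{w_{k'}}\cdots\cR_{w_1}.\cL_{v_\ell}\cdots\cL_{v_1}.\gamma$ that occurs in $\|\gamma\|^{R,L}_{k,\ell,p}$ (here $i=k'\le k$, $j=\ell$).

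The point to verify carefully is that no derivative ever falls on the coefficients $a_{v,w}$: at each step I merely re-express the value of an already-differentiated function through the pointwise operator identity, so the coefficients produced earlier survive as scalar multipliers, and commutativity guarantees that the freshly produced right-invariant field reaches~$\gamma$ without passing through them. Granting this, the estimate is immediate: the $a_{v,w}$ are smooth, hence bounded on the compact set~$K$, and since $\gamma$ is supported in~$K$ the suprema over~$G$ are attained on~$K$; there are only $|\cB|^{k'}\le(\dim G)^k$ summands, and only finitely many generating terms in total (the basis~$\cB$ is finite and $m\le k+\ell$). Taking $C$ to be the resulting finite bound, maximised over all generating terms of $\|\gamma\|^L_{k+\ell,p}$ and chosen $\ge 1$ to cover the case $m\le\ell$, gives $\|\gamma\|^L_{k+\ell,p}\le C\,\|\gamma\|^{R,L}_{k,\ell,p}$, while the final equality $C\,\|\gamma\|^{R,L}_{k,\ell,p}=\|\gamma\|^{R,L}_{k,\ell,C\cdot p}$ is just positive homogeneity of $\|.\|_{p,\infty}$ in~$p$. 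The main obstacle is exactly the bookkeeping of these non-commuting conversions; everything after it is a routine boundedness-on-compacta argument.
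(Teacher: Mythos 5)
Your proof is correct, but it takes a genuinely different route from the paper's. The paper argues softly: it passes to the normed quotient $E_p=E/p^{-1}(0)$ with canonical map $\pi$ and norm $P=\|.\|_p$, observes via Lemma~\ref{enoughsn} that the two single seminorms $\|.\|^L_{k+\ell,P}$ and $\|.\|^{R,L}_{k,\ell,P}$ each define the topology of $C^{k+\ell}_K(G,E_p)$ and are therefore equivalent, and pulls the resulting estimate back along $\gamma\mto\pi\circ\gamma$. The quotient step is the essential trick there: for a general locally convex $E$ one cannot extract a single constant from the mere coincidence of two generating \emph{families} of seminorms, but over $E_p$ both sides become single seminorms generating a common topology, whence $\|.\|^L_{k+\ell,P}\leq C\|.\|^{R,L}_{k,\ell,P}$. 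Your argument instead re-proves the comparison by hand, using $[\cL_v,\cR_w]=0$ and the pointwise identity $\cL_v(g)=\cR_{\mathrm{Ad}(g)v}(g)$ to trade the outermost $m-\ell$ left-invariant derivatives for right-invariant ones with smooth coefficient functions that are never differentiated and are bounded on $K$ (where all the iterated derivatives of $\gamma$ are supported). This is exactly the content hidden inside the citation of Lemma~\ref{enoughsn} (i.e., of \cite[Lemma~1.8]{BaG}), so what you gain is a self-contained, explicit proof with a constant one could in principle compute; what you pay is the combinatorial bookkeeping of the conversion, which you handle correctly --- in particular the key observation that each rewriting is applied pointwise to an already-formed function value, so no derivatives land on the coefficients.
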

\begin{proof}
Let $E_p=E/p^{-1}(0)$ be the corresponding normed space,
$\pi\colon E\to E_p$ be the canonical
map and $P:=\|.\|_p$ be the norm on~$E_p$.
Because both $\|.\|^L_{k+\ell,P}$ and $\|.\|^{R,L}_{k,\ell,P}$
define the topology of $C^{k+\ell}_K(G,E_p)$
(see Lemma~\ref{enoughsn}),
there exists $C>0$ such that $\|.\|^L_{k+\ell,P}\leq C\|.\|^{R,L}_{k,\ell,P}$.
Thus $\|\gamma\|^L_{k+\ell,p}=$\linebreak
$\|\pi\circ \gamma\|^L_{k+\ell,P}\leq C\|\pi\circ\gamma\|^{R,L}_{k,\ell,P}
=C\|\gamma\|^{R,L}_{k,\ell,p}$ for all $\gamma\in C^{k+\ell}_K(G,E)$.
\end{proof}
{\bf Proof of Lemma~\ref{examp2b}.}
Let $(h_i)_{i\in\N}$ be a partition
of unity for $G$ (smooth if $G$ is a Lie group,
continuous if $G$ is merely a locally compact group).
Let $\Phi\colon C^r_c(G,E_1)\to\bigoplus_{i\in\N} C^r_{K_i}(G,E_1)$
and $\Psi\colon C^s_c(G,E_2)\to\bigoplus_{i\in\N} C^s_{K_i}(G,E_2)$
be the embeddings taking $\gamma$ to $(h_i\gamma)_{i\in \N}$
(see Lemma~\ref{lcsum}).
Let
\[
f\colon \bigoplus_{i\in \N}C^r_{K_i}(G,E_1)\times \bigoplus_{j\in \N}C^s_{K_j}(G,E_2)\to C^t_c(G,F)
\]
be the map taking $((\gamma_i)_{i\in\N},(\eta_j)_{j\in \N})$
to $\sum_{i,j\in \N}\gamma_i*_b\eta_j$.
Since $\beta_b=f\circ (\Phi\times \Psi)$, we need only show
that~$f$ admits product estimates (Lemma~\ref{laobvi}).
To verify the latter property, let $P_{\sigma,\tau}$ be continuous
seminorms on $C^t_c(G,F)$ for $\sigma,\tau\!\in \!\N$.\\[2.4mm]
Before we turn to the general case, let us consider the instructive special case $r=s=t=0$
(whose proof is much simpler).
For all $i,j,\sigma,\tau\in \N$,
there exists a continuous seminorm $P_{i,j,\sigma,\tau}$ on~$F$
such that
\[
P_{\sigma,\tau}(\gamma)\leq \|\gamma\|_{P_{i,j,\sigma,\tau},\infty}
\]
for all $\gamma\in C_{K_iK_j}(G,F)$ (cf.\ Lemma~\ref{enoughsn}
and the lines thereafter).
Since
$b\colon E_1\times E_2\to F$ admits product estimates
and the set $\N\times \N$ (which contains the $(i,\sigma)$ and $(j,\tau)$)
admits a bijective map $\N\times \N\to\N$,
there exist continuous seminorms
$p_{i,\sigma}$ on~$E_1$ and $q_{j,\tau}$ on~$E_2$
such that
\[
P_{i,j,\sigma,\tau}(b(x,y))\leq p_{i,\sigma}(x)q_{j,\tau}(y)
\]
for all $i,j,\sigma,\tau\in \N$ and $x\in E_1$, $y\in E_2$.
Define $S_{i,\sigma} \colon C_{K_i}(G,E_1)\to[0,\infty[$
and $Q_{j,\tau}\colon C_{K_j}(G,E_2)\to[0,\infty[$
via $S_{i,\sigma}:=\lambda_G(K_i)\|.\|_{p_{i,\sigma},\infty}$ and
$Q_{j,\tau}:=\|.\|_{q_{j,\tau},\infty}$,
respectively.
Then
\[
P_{\sigma,\tau}(\gamma*_b\eta) \leq  \|\gamma*_b\eta\|_{P_{i,j,\sigma,\tau},\infty}
\leq \|\gamma\|_{p_{i,\sigma},\infty}\|\eta\|_{q_{j,\tau},\infty}\lambda_G(K_i)
= S_{i,\sigma}(\gamma)Q_{j,\tau}(\eta)\]
for all $(\gamma,\eta)\in C_{K_i}(G,E_1)\times C_{K_j}(G,E_2)$
(using Lemma~\ref{lasimpe} for the second inequality).
The hypotheses of Lemma~\ref{specifsum} are therefore satisfied,
whence~$f$ (and hence also~$\beta_b$)
admits product estimates.\\[2.4mm]
We now complete the proof of the lemma in full generality.
In the case $t<\infty$, we
choose $k,\ell\in \N_0$
with $k\leq r$, $\ell\leq s$ and $k+\ell=t$.
For all $i,j\in \N$,
there exists a continuous seminorm $P_{i,j,\sigma,\tau}$ on~$F$
such that
\[
P_{\sigma,\tau}(\gamma)\leq \|\gamma\|^{R,L}_{k,\ell,P_{i,j,\sigma,\tau}}
\]
for $\gamma\in C^t_{K_iK_j}(G,F)$ (Lemma~\ref{enoughsn}).
Set $r_{i,\sigma}:=k$ and $s_{j,\tau}:=\ell$ for $i,j,\sigma,\tau\in \N$.

In the case $r=s=t=\infty$,
there exist $t_{i,j,\sigma,\tau}\in \N_0$
and continuous seminorms
$Q_{i,j,\sigma,\tau}$ on~$F$
such that
$P_{\sigma,\tau}(\gamma)\leq \|\gamma\|^L_{t_{i,j,\sigma,\tau},Q_{i,j,\sigma,\tau}}$
for all $\gamma\in C^\infty_{K_iK_j}(G,F)$.
Using Lemma~\ref{counteg},
we find $r_{i,\sigma},s_{j,\tau}\in \N_0$
such that
\[
r_{i,\sigma}+s_{j,\tau} \geq t_{i,j,\sigma,\tau}
\]
for all $i,j,\sigma,\tau\in \N$.
Then
$\|.\|^L_{t_{i,j,\sigma,\tau},Q_{i,j,\sigma,\tau}}\leq
\|.\|^L_{r_{i,\sigma}+s_{j,\tau},Q_{i,j,\sigma,\tau}}\leq
\|.\|^{R,L}_{r_{i,\sigma},s_{j,\tau},P_{i,j,\sigma,\tau}}$
on $C^\infty_{K_i,K_j}(G,F)$,
with some positive multiple $P_{i,j,\sigma,\tau}$ of
$Q_{i,j,\sigma,\tau}$ (Lemma~\ref{viaquot}).

In either case, since
$b\colon E_1\times E_2\to F$ admits product estimates,
there exist continuous seminorms
$p_{i,\sigma}$ on~$E_1$ and $q_{j,\tau}$ on~$E_2$
such that
\[
P_{i,j,\sigma,\tau}(b(x,y))\leq p_{i,\sigma}(x)q_{j,\tau}(y)
\]
for all $i,j,\sigma,\tau\in \N$ and $x\in E_1$, $y\in E_2$.
Define $S_{i,\sigma} \colon C^r_{K_i}(G,E_1)\to[0,\infty[$
and $Q_{j,\tau}\colon C^s_{K_j}(G,E_2)\to[0,\infty[$
via $S_{i,\sigma}:=\lambda_G(K_i)\|.\|^R_{r_{i,\sigma},p_{i,\sigma}}$ and $Q_{j,\tau}:=\|.\|^L_{s_{j,\tau},q_{j,\tau}}$,
respectively.
Then
\begin{eqnarray*}
P_{\sigma,\tau}(\gamma*_b\eta) &\leq  &\|\gamma*_b\eta\|^{R,L}_{r_{i,\sigma},s_{j,\tau},P_{i,j,\sigma,\tau}}
\leq \|\gamma\|^R_{r_{i,\sigma},p_{i,\sigma}}\|\eta\|^L_{s_{j,\tau},q_{j,\tau}}\lambda_G(K_i)\\
&= & S_{i,\sigma}(\gamma)Q_{j,\tau}(\eta)
\end{eqnarray*}
for all $(\gamma,\eta)\in C^r_{K_i}(G,E_1)\times C^s_{K_j}(G,E_2)$
(using Lemma~\ref{lasimpe}).
As the hypotheses of Lemma~\ref{specifsum} are satisfied,
$f$ (and thus~$\beta_b$)
admits product estimates.\vspace{3mm}\,\Punkt

\noindent
{\bf Proof of Theorem~C.}
\emph{Case}~1: \emph{$G$ is a finite group}.
Then $G$ is compact and hence~$\beta_b$ is always continuous
\cite[Corollary~2.3]{BaG}.
If $\beta_b$ admits product estimates, then also $b$ admits these
(Lemma~\ref{preforint}).
If $b$ admits product estimates,
then $\beta_b$
admits product estimates, by Lemma~\ref{examp2b}
(note that any $(r,s,t)$ can be replaced with $(0,0,0)$
without changing the function spaces).

\emph{Case}~2: \emph{$G$ is an infinite discrete group}.
If $\beta_b$ is continuous,
then $G$ is countable and~$b$ admits product estimates,
by \cite[Proposition~6.1]{BaG}.
If $G$ is countable and $b$ admits product estimates,
then~$\beta_b$
admits product estimates, by Lemma~\ref{examp2b}
(note that any $(r,s,t)$ can be replaced with $(0,0,0)$
without changing the function spaces).
If~$\beta_b$ admits product estimates,
then~$\beta_b$ is continuous, as observed in the introduction.

\emph{Case}~3: \emph{$G$ is an infinite compact group}
(and hence not discrete).
Then $\beta_b$ is always continuous,
by \cite[Corollary~2.3]{BaG}.
If $\beta_b$ admits product estimates,
then also $b$ admits these (by Lemma~\ref{preforint}),
and if $t=\infty$, then also $r=s=\infty$ (see Lemma~\ref{examp2}).
Thus (a)--(c) from Theorem~A are satisfied.
If, conversely, (a)--(c) are satisfied,
then $\beta_b$ admits product estimates, by Lemma~\ref{examp2b}.

\emph{Case}~4: \emph{$G$ is neither compact nor discrete.}
If $\beta_b$ admits product estimates, then $\beta_b$
is continuous and hence (a)--(c) hold by Theorem~A.
If, conversely, (a)--(c) are satisfied, then $\beta_b$ admits product
estimates, by Lemma~\ref{examp2b}.\,\Punkt
\section{Product estimates on spaces without norm}
If we start with a continuous bilinear map $b\colon E_1\times E_2\to F$ on a product of normed spaces,
then it satisfies product estimates (by Proposition~\ref{examp1}),
and can be fed into Theorem~C,
to obtain bilinear maps $\beta$ on function spaces that admit product estimates.
Since $E_1$ and $E_2$ are normed, also the function spaces admit a continuous norm.
However, the existence of a continuous norm on the domain $E_1\times E_2$
is not necessary for the existence of product estimates,
as the trivial example $\beta\colon E_1\times E_2\to \R$, $\beta(x,y):=0$ shows.
The situation does not change if one assumes that $\beta$ is non-degenerate
in the sense that $\beta(x,.)\not=0$ and $\beta(.,y)\not=0$ for all $0\not= x\in E_1$ and $0\not= y\in E_2$,
as illustrated by the following example.
\begin{example}\label{exenew}
Let $M$ be an uncountable set and
$E:=\R^{(M)}$ be the set of all functions $\gamma\colon M\to\R$
with finite support, equipped with the (unusual!) locally convex topology $\cO$
which is initial with respect to the restriction maps
\[
\rho_C\colon E\to \R^{(C)},\quad \gamma\mto\gamma|_C
\]
for all countable subsets $C\sub M$, where $\R^{(C)}$ is equipped
with the finest locally convex topology (turning $\R^{(C)}$ into
the locally convex direct sum $\bigoplus_{j\in C}\R$).
Hence the seminorms
\[
p_v\colon E\to[0,\infty[\,,\quad p_v(\gamma):=\max\{v(m)|\gamma(m)|\colon m\in M\}
\]
define the locally convex topology on~$E$,
for $v$ ranging through the set $\cV$ of all functions
$v\colon M\to [0,\infty[$ such that $\{m\in M\colon v(m)>0\}$ is countable.
Since none of these $p_v$ is a norm, we conclude that~$E$ does not admit a continuous
norm.
If $B\sub E$ is bounded, then $B\sub \R^F$ for some finite subset $F\sub M$,
as is easy to see.\footnote{If not, there are $\gamma_1,\gamma_2,\ldots\in B$ such that
the set $C:=\{m \in M\colon \gamma_n(m)\not=0$
for some $n\in\N\}$
is infinite. Note that~$C$ is also countable.
Now $\rho_C(B)$ is a bounded subset in $\R^{(C)}$
and hence contained in $\R^F$ for some finite subset $F\sub C$ (see \cite[II.6.3]{SaW}),
a contradiction.}
Hence $E$ is quasi-complete (and hence sequentially complete).
Consider the map
$\beta\colon E\times E\to E$,
$(\gamma,\eta)\mto\gamma \eta$
taking~$\gamma$ and~$\eta$ to their pointwise product $\gamma\eta$,
given by $(\gamma\eta)(m):=\gamma(m)\eta(m)$.
Then $\beta$
is bilinear and non-degenerate, as $\gamma\gamma \not=0$
for each $\gamma\in E\setminus\{0\}$.
The pointwise multiplication map
$\beta_C\colon \R^{(C)}\times \R^{(C)}\to \R^{(C)}$ is bilinear
and hence continuous (see Corollary~\ref{Rinfty}),
for each countable set $C\sub M$.
Since $\R^{(\N)}$
satisfies the countable upper bound condition by Proposition~\ref{excubc}\,(f),
$\beta_C$
satisfies product estimates (by Proposition~\ref{cubcuseful}).
To see that~$\beta$ admits product estimates,
let $p_{i,j}$ be continuous seminorms on~$E$ for $i,j\in \N$.
After replacing the latter by larger seminorms,
we may assume that $p_{i,j}=p_{v_{i,j}}$ for some $v_{i,j}\in \cV$.
Then $C:=\{m \in M \colon \mbox{$v_{i,j}(m)\not=0$ for some $(i,j)\in \N\times \N$}\}$
is a countable set.
Let $\cV_C$ be the set
of all $v\in \cV$
such that $v(m)=0$ for all $m\in M\setminus C$.
For $v\in \cV_C$, define $q_v\colon \R^{(C)}\to [0,\infty[$, $q_v(\gamma):=\max\{v(m)|\gamma(m)|\colon m\in C \}$.
Then $q_v\circ \rho_C=p_v$.
Since $\beta_C$ admits product estimates,
there are $v_i,w_i\in \cV_C$ such that
$q_{v_{i,j}}(\beta_C(\gamma,\eta))\leq q_{v_i}(\gamma)q_{w_j}(\eta)$
for all $i,j\in \N$ and $\gamma,\eta\in \R^{(C)}$.
Then
$p_{i,j}(\beta(\gamma,\eta))=p_{v_{i,j}}(\beta(\gamma,\eta))
=q_{v_{i,j}}(\rho_C(\beta(\gamma,\eta)))
=q_{v_{i,j}}(\beta_C(\rho_C(\gamma),\rho_C(\eta)))
\leq q_{v_i}(\rho_C(\gamma))q_{w_j}(\rho_C(\eta))
=p_{v_i}(\gamma)p_{w_j}(\eta)$
for all $\gamma,\eta\in E$,
showing that $\beta$ admits product estimates.
\end{example}
\emph{Acknowledgement.}
The research was supported by DFG, grant GL 357/5--1.
The author is grateful to the referee for
improvements of the presentation.
Thanks are also due to C. Bargetz for
discussions and to J. Bonet, E. Jord\'{a} and J. Wengenroth
for references to the literature,
which were incorporated into the final version.\vspace{1mm}
{\small Helge Gl\"{o}ckner, Universit\"{a}t Paderborn, Institut f\"{u}r Mathematik,\\
Warburger Str.\ 100, 33098 Paderborn, Germany.\\[1mm]
Email: {\tt  glockner\at{}math.upb.de}}
\end{document}